\documentclass{amsart}

\usepackage{graphicx}
\usepackage{amsopn}
\usepackage{amsmath}
\usepackage{amsfonts}
\usepackage{amssymb}
\usepackage{amscd}
\usepackage{mathtools}
\usepackage{mathrsfs}
\usepackage{url}
\usepackage{multirow,multicol}
\usepackage{cleveref,xcolor}
\usepackage{booktabs}

\newcommand{\mA}{\mathcal{A}}
\newcommand{\supp}[1]{\operatorname{supp}(#1)}
\def\rank{\mbox{rank}}
\newcommand{\re}{\mathbb{R}}

\newcommand{\N}{\mathbb{N}}

\newcommand{\lmd}{\lambda}

\newcommand{\Dt}{ {\rm \Delta} }
\newcommand{\Sig}{ {\rm \Sigma} }
\newcommand{\st}{\mathrm{s.t.}}
\newcommand{\reff}[1]{(\ref{#1})}
\def\af{\alpha}

\def\gm{\gamma}
\newcommand{\mc}[1]{\mathcal{#1}}
\newcommand{\idl}{\mbox{Ideal}}
\newcommand{\qmod}{\mbox{Qmod}}

\newcommand{\be}{\begin{equation}}
	\newcommand{\ee}{\end{equation}}
\newcommand{\beq}{\begin{equation}}
	\newcommand{\eeq}{\end{equation}}
\newcommand{\baray}{\begin{array}}
	\newcommand{\earay}{\end{array}}

\newcommand{\bbm}{\begin{bmatrix}}
	\newcommand{\ebm}{\end{bmatrix}}

\newcommand{\bit}{\begin{itemize}}
	\newcommand{\eit}{\end{itemize}}

\newcommand{\bdes}{\begin{description}}
	\newcommand{\edes}{\end{description}}

\makeatletter
\def\@thmcountersep{.}
\makeatother

\newtheorem{Theorem}{Theorem}[section]
\newtheorem{thm}[Theorem]{Theorem}

\newtheorem{lem}[Theorem]{Lemma}

\theoremstyle{definition}
\newtheorem{defi}[Theorem]{Definition}
\newtheorem{Example}[Theorem]{Example}

\theoremstyle{remark}
\newtheorem{Remark*}[Theorem]{Remark}
\newtheorem{rmk}[Theorem]{Remark}

\numberwithin{equation}{section}

\begin{document}

\title{Sparse Copositive Polynomial Optimization}

% --- Standardized Author Information for amsart ---
\author{Suhan Zhong}
\address{School of Mathematical Sciences, Shanghai Jiaotong University, Shanghai, 200240, China}
\email{suzhong@sjtu.edu.cn}

\author{Jinling Zhou}
\address{School of Mathematics and Computational Science, Xiangtan University, Xiangtan, Hunan, 411105, China}
\email{jinlingzhou@smail.xtu.edu.cn}

\author{Jiawang Nie}
\address{Department of Mathematics, University of California San Diego, 9500 Gilman Drive, La Jolla, CA, USA, 92093}
\email{njw@math.ucsd.edu}

\author{Xindong Tang}
\address{Department of Mathematics, Hong Kong Baptist University, Kowloon Tong, Kowloon, Hong Kong}
\email{xdtang@hkbu.edu.hk}

% In amsart, abstract and keywords must come before \maketitle
\begin{abstract}
This paper studies the copositive optimization problem whose objective is a sparse polynomial, with linear constraints over the nonnegative orthant. We propose sparse Moment-SOS relaxations to solve it. Necessary and sufficient conditions are shown for these relaxations to be tight. In particular, we prove they are tight under the cop-SOS convexity assumption. Compared to the traditional dense ones, the sparse Moment-SOS relaxations are more computationally efficient. Numerical experiments are given to show the efficiency.
\end{abstract}

\keywords{copositive polynomial, sparse, cop-SOS convex, moment-SOS, relaxation}
\subjclass[2020]{90C23, 90C22, 90C30}

\maketitle

\section{Introduction}

Let $x \coloneqq (x_1,\ldots, x_n)$ be a vector variable
in the $n$-dimensional real Euclidean space $\re^n$.
We consider the sparse polynomial optimization problem
\begin{equation}\label{eq:spa_pop}
\left\{\begin{array}{cl}
	\min\limits_{x\in\re^n} &
	f(x) \coloneqq f_1(x_{\Dt_1}) + \cdots + f_m(x_{\Dt_m})\\
	\st & Ax = b,\, Cx\ge d,\, x\ge 0, 
\end{array}
\right.
\end{equation}
for given matrices $A\in\re^{m_1\times n}$, $C\in\re^{m_2\times n}$
and vectors $b\in\re^{m_1}$, $d\in\re^{m_2}$.
In the above, $\Dt_1,\ldots, \Dt_m$ are subsets of $[n]\coloneqq \{1,\ldots, n\}$
such that
\[ \Dt_1\cup\cdots\cup \Dt_m  \, =  \,  [n] .\]
Each $f_i(x_{\Dt_i})$ is a polynomial in the subvector
\[
x_{\Dt_i}\coloneqq (x_{j_1}, \cdots, x_{j_{n_i}}),
\qquad \text{where} \quad 
\Dt_i = \{j_1,\ldots, j_{n_i}\} .
\]
A polynomial like $f$ as in \reff{eq:spa_pop} is said to have 
the sparsity pattern $(\Dt_1,\ldots, \Dt_m)$. 
In the above, the matrices $A$ and $C$ can be either dense or sparse.
The minimum value of \reff{eq:spa_pop} is denoted as $f_{min}$,
and its feasible region is 
\be \label{eq:setK}
K = \{ x\in\re^n: Ax=b,\, Cx\ge d,\, x\ge 0 \}.
\ee
We call \reff{eq:spa_pop} a {\it sparse copositive polynomial optimization} problem. This is not the same as {\it copositive programming},
which typically means linear conic optimization 
problems over the copositive matrix cone
\cite{AhmedDurStill13,Burer15,DobreVera15,Dur10}.

Problem~\eqref{eq:spa_pop} can be viewed as a general dense polynomial optimization problem. It can be solved by the standard Moment-SOS hierarchy of semidefinite programming (SDP) relaxations 
\cite{Las01,LasBk15,Lau09,NieBook}.
We call these relaxations to be the {\it dense} ones. 
This hierarchy has asymptotic convergence under the Archimedeanness \cite{Las01} and has finite convergence under some additional optimality conditions \cite{nieopcd}. It is typically expensive to solve the dense Moment-SOS relaxations for large-scale problems.

Recently, sparse Moment-SOS relaxations have been extensively used
to solve large-scale polynomial optimization problems. In them, the objective function usually has the sparsity pattern like $f$ as in (\ref{eq:spa_pop}), 
and each constraining polynomial is only about a subvector $x_{\Dt_i}$.
By exploiting the sparsity, the sparse relaxation formulates semidefinite programs whose positive semidefinite matrix variables have smaller dimensions, compared with the dense one. Sparse relaxations are efficient for solving large scale problems. They have asymptotic convergence under certain conditions. We refer to the earlier work \cite{korda2025convergence,Lasspacov06,vmjw21,NieDemmel09,CSTSSOS,waki2006sums}.
The finite convergence property of sparse Moment-SOS relaxations is investigated in \cite{NieQuTangZhang25,NQTZ25mat}.
Moreover, there exist other types of sparsity patterns in polynomial optimization; see \cite{Kim2011,KLMS23,MagronWang23,TSSOS}.
We remark that problem~\eqref{eq:spa_pop} is different from other sparse polynomial optimization problems, since its linear constraints can be dense. The prior sparse relaxation methods are not appropriate for solving the sparse copositive polynomial optimization problem (\ref{eq:spa_pop}). In this paper, we propose new sparse Moment-SOS relaxations to solve \eqref{eq:spa_pop}.

Copositive polynomials have been extensively and actively studied in
optimization \cite{BomzeDur00,Dur10,NieYangZhang18,NieDehom23}.
A homogeneous polynomial $p(x)$ of degree $d$
is said to be {\it copositive} if $p(x)\ge 0$ for all $x \ge 0$.
For the degree two case, a symmetric matrix $A$ is copositive
if and only if the associated quadratic form is copositive.
The set of all $d$th degree copositive forms in $n$ variables is a convex cone, which we denote by $\mc{COP}_{n,d}$.
Copositive polynomials and their applications are studied in \cite{EichfelderJahn08,EichfelderPovh13,Lasserre14,NieYangZhang18}.
Checking memberships of the copositive cone $\mc{COP}_{n,d}$
is NP-hard \cite{DickinsonGijben14}. We refer to
\cite{HuangXie25,NieYangZhang18,NieDehom23,PowersReznick01}
for the related work.
In the existing literature (e.g., \cite{AhmedDurStill13,Burer15,DobreVera15,Dur10}),
the term {\it copositive programming} or {\it copositive optimization} usually refers to the linear conic optimization problem over the copositive matrix cone $\mc{COP}_{n,2}$. We refer to
\cite{Burer09,Dukanovic10,PovhRendl07,VargasLaurent23}
for applications of copositive optimization.

\medskip \noindent
{\bf Contributions.}
Problem~\eqref{eq:spa_pop} is a class of linearly constrained sparse polynomial optimization problems. 
It can be equivalently formulated as  
\be \label{sparcop:conic}
\left\{\begin{array}{cl}
	\max  & \gamma   \\
	\st & f(x) -\gamma\in \mathscr{P}_d(K), 
\end{array}
\right.
\ee
where $d$ is the degree of $f$ and $\mathscr{P}_d(K)$ denotes the cone of degree-$d$ 
polynomials that are nonnegative on $K$, i.e.,   
\be \label{cone:Cnd}
\mathscr{P}_d(K)  \coloneqq \{ q\in \re[x]_d: q(x)\ge 0 \,\,\forall x\in K\}.
\ee
In this paper, we propose new sparse Moment-SOS relaxations to solve \reff{sparcop:conic}, by exploiting the sparsity pattern of $f$, and  investigate conditions for these relaxations to be tight.
The sparse relaxations are more appropriate for solving large-scale problems. Our main contributions are:

\begin{itemize}
	
\item 
We propose new sparse Moment-SOS relaxations to solve \eqref{eq:spa_pop}, or its equivalent reformulation \reff{sparcop:conic}.  The sparse moment and SOS relaxations are given in \reff{eq:spcopmom} and \reff{eq:spcopsos}, respectively.

\item We characterize conditions for the sparse Moment-SOS relaxations \reff{eq:spcopmom}-\reff{eq:spcopsos} to be tight. 
This is shown in Theorems~\ref{thm:tight:char}, \ref{thm:rank1_tight}, and \ref{thm:multipoints}.

\item We show that the sparse Moment-SOS relaxations \reff{eq:spcopmom}-\reff{eq:spcopsos} are tight under 
the cop-SOS convexity assumption.
This is shown in Theorem~\ref{thm:cop-sos-cvx}.
	
\end{itemize}

The rest of this paper is organized as follows. In Section~\ref{sec:pre}, we introduce some notation and give preliminaries for polynomial optimization. In Section~\ref{sec:spmomsos}, we propose the sparse Moment-SOS relaxations \reff{eq:spcopmom}-\reff{eq:spcopsos} and characterize conditions for their tightness. In Section~\ref{sec:tightness}, we prove these sparse relaxations are tight under the cop-SOS convexity assumption. Numerical experiments are presented in Section~\ref{sec:num} and some conclusions are drawn in Section~\ref{sec:con}.

\section{Preliminaries}\label{sec:pre}

\noindent 
{\it Notation} \, 
The symbol $\re$ denotes the real field,
$\re_+$ denotes the set of nonnegative real numbers,
and $\N$ denotes the set of nonnegative integers.
For a positive real number $t$, $\lceil t\rceil$ denotes
the smallest integer that is greater than or equal to $t$.
For a positive integer $n$, denote $[n]\coloneqq \{1,\ldots, n\}$.
The notation $\re^n$ (resp. $\re_+^n$, $\N^n$) stands for the set of
$n$-dimensional vectors of entries in $\re$ (resp. $\re_+$, $\N$).
We denote the subspace
\[ 
	\re^{\Dt_i} \coloneqq \{x = (x_1,\ldots, x_n)\in\re^n: 
	x_j = 0\,\,\text{for all}\,\, j\not\in \Dt_i\} ,
\]
and denote $\re_+^{\Dt_i}$ and $\N^{\Dt_i}$ similarly.
The notation $\re[x]$ and $\re[x_{\Dt_i}]$ stand for the polynomial rings in
$x$ and $x_{\Dt_i}$, respectively.
The set of polynomials in $\re[x]$ with degrees up to $d$
is denoted as $\re[x]_d$, and $\re[x_{\Dt_i}]_d$ is defined similarly.
The column vector of monomials in $x$ and of degrees up to $d$,
ordered in the graded lexicographic ordering, is denoted by
\[
[x]_d  \coloneqq \big[ \baray{ccccccccccccc}
1 &  x_1  & \cdots  &  x_n   & x_1^2  &  x_1 x_2 &  \cdots  &   x_n^2  &    \cdots \,  &
x_1^d  &   x_1^{d-1}x_2  &  \cdots  & x_n^d
\earay]^T.
\]
The monomial vector $[x_{\Dt_i}]_d$ is defined similarly.

\subsection{Standard Moment-SOS relaxations}
\label{ssec:stan_mom_sos}

General polynomial optimization problems can be solved globally
by the dense Moment-SOS relaxations \cite{Las01}. 
A polynomial $\sigma\in\re[x]$ is said to be a sum-of-squares (SOS) if there exist polynomials 
$p_1,\ldots, p_l\in\re[x]$ such that $\sigma = p_1^2+\cdots+p_l^2$.
We denote by $\Sig[x]$ the cone of SOS polynomials in $x$.
For a degree $d$, we denote the truncation 
\[
\Sig[x]_d  \, \coloneqq \, \Sig[x]\cap \re[x]_d .
\]
SOS polynomials can be represented by semidefinite programming \cite{NieBook}. 
In particular, we have
$p \in \Sig[x]_{2d}$ if and only if there exists
a positive semidefinite matrix $P$ such that 
\[
p(x) \, = \, [x]_d^T P [x]_d .
\]

We consider the general polynomial optimization problem
\be \label{eq:pop}
\left\{\begin{array}{cl}
	\min  & f(x)\\
	\st & g(x) \coloneqq (g_1(x),\ldots, g_{m_1}(x)) = 0,\\
	&  h(x) \coloneqq 
	(h_1(x),\ldots, h_{m_2}(x))\ge 0.
\end{array}
\right.
\ee
Let $S$ be the feasible set of \reff{eq:pop}.
Then the above  is equivalent to
\[
\left\{\begin{array}{cl}
	\max  & \,\, \gamma \\
	\st & \,\,  f(x)-\gamma\ge 0 \,\, \text{on} \,\, S.
\end{array}
\right.
\]
To represent polynomials that are nonnegative on $S$,
we denote the ideal generated by $g$ as
\[
\idl[g]  \coloneqq  g_1\re[x]+\cdots+g_{m_1}\re[x],
\]
and denote the quadratic module generated by $h$ as
\[
\qmod[h]  \coloneqq \Sig[x] + h_1\Sig[x]+\cdots+h_{m_2}\Sig[x].
\]
Every polynomial $p\in\idl[g]+\qmod[h]$ is nonnegative on $S$,
but the reverse is not necessarily true.
The set $\idl[g]+\qmod[h]$ is said to be {\it Archimedean} 
if there exists $q \in \idl[g]+\qmod[h]$ such that 
the single inequality $q(x)\ge 0$ defines a compact set in $\re^n$.
When $\idl[g]+\qmod[h]$ is Archimedean, every polynomial that is positive
on $S$ belongs to $\idl[g]+\qmod[h]$. This result is referenced as Putinar's Positivstellensatz \cite{Putinar93}.

For a degree $k\in\N$ with $2k\ge \max\{\deg(g), \deg(h)\}$,
we denote the truncation
\[
\begin{array}{rcl}
\idl[g]_{2k} & \coloneqq & g_1\re[x]_{2k-\deg(g_1)}+\cdots+
g_{m_1}\re[x]_{2k-\deg(g_{m_1})},\\
\qmod[h]_{2k} & \coloneqq &  \Sig[x]_{2k}+ h_1\Sig[x]_{2k-\deg(h_1)}+
\cdots + h_{m_2}\Sig[x]_{2k-\deg(h_{m_2})}.
\end{array}
\]
The $k$th order dense SOS relaxation for solving \eqref{eq:pop} is
\be   \label{eq:pop_sos_k}
\left\{\begin{array}{cl}
	\max\limits_{\gamma\in\re} & \gamma\\
	\st & f(x)-\gamma\in \idl[g]_{2k}+\qmod[h]_{2k}.
\end{array}
\right.
\ee
This is equivalent to a semidefinite program.
Its dual problem corresponds to the $k$th order dense moment relaxation of \eqref{eq:pop}.
They are called the $k$th order Moment-SOS relaxations. 
When $\idl[g]+\qmod[h]$ is Archimedean, 
the optimal value of \eqref{eq:pop_sos_k} converges to that of \eqref{eq:pop} as $k$ goes to infinity \cite{Las01}. 
This is called the asymptotic convergence.
Additionally, if some optimality conditions hold for \eqref{eq:pop}, 
these two optimal values are equal when $k$ is large enough \cite{nieopcd}.
This is called the finite convergence.
The tightness of Moment-SOS relaxations can be checked by the {\it flat truncation} condition \cite{NieFlat13}.
We refer to \cite{NieBook} for more detailed introductions to Moment-SOS relaxations for polynomial optimization.

\subsection{Sparse polynomials and moments}
Let $\Dt_1,\ldots, \Dt_m$ be subsets of $[n]$ such that
$\Dt_1\cup\cdots\cup \Dt_m = [n]$.
Write $\Dt_i = \{j_1,\ldots, j_{n_i}\}$ for each $i=1,\ldots, m$.
The cone of all SOS polynomials in $x_{\Dt_i}$ is denoted as
$\Sig[x_{\Dt_i}]$. For a degree $d$, we denote
$\Sig[x_{\Dt_i}]_d\coloneqq \Sig[x_{\Dt_i}]\cap \re[x_{\Dt_i}]_d$.
The vector inequality $x\ge 0$ is equivalent to
$(x_{\Dt_1},\ldots, x_{\Dt_m})\ge 0$.
For $x_{\Dt_i}\coloneqq (x_{j_1},\ldots, x_{j_{n_i}})$
and a positive integer $k$, denote  
\begin{eqnarray*}
\qmod_{\Dt_i}[x_{\Dt_i}] &\coloneqq& \Sig[x_{\Dt_i}] + x_{j_1}\Sig[x_{\Dt_i}]+\cdots+ x_{j_{n_i}}\Sig[x_{\Dt_i}], \\
\qmod_{\Dt_i}[x_{\Dt_i}]_{2k} &\coloneqq& \Sig[x_{\Dt_i}]_{2k} + x_{j_1} \Sig[x_{\Dt_i}]_{2k-2}
	+\cdots+ x_{j_{n_i}}\Sig[x_{\Dt_i}]_{2k-2}.
\end{eqnarray*}
The $\qmod_{\Dt_i}[x_{\Dt_i}]$ is the sparse quadratic module of $x_{\Dt_i}$,
and $\qmod_{\Dt_i}[x_{\Dt_i}]_{2k}$ is the $k$th order truncation.
For convenience, we denote 
\begin{eqnarray*}
	\qmod[x]_{spa} &\coloneqq& \qmod_{\Dt_1}[x_{\Dt_1}] + \cdots + \qmod_{\Dt_m}[x_{\Dt_m}],\\
	\qmod[x]_{spa,2k} &\coloneqq& \qmod_{\Dt_1}[x_{\Dt_1}]_{2k} + \cdots + \qmod_{\Dt_m}[x_{\Dt_m}]_{2k}.
\end{eqnarray*} 

For a degree $d$, recall that the monomial vector of $x_{\Dt_i}$ with degrees up to $d$ is denoted as 
$[x_{\Dt_i}]_d \coloneqq (x^{\alpha})_{\alpha\in \N_d^{\Dt_i}}$,
where 
\[
\N_d^{\Dt_i}  \coloneqq \{\alpha\in \N^{\Dt_i}: \alpha_{j_1}+\cdots+\alpha_{j_{n_i}}\le d\} .
\]
For an even degree $2k$, let $\re^{\N_{2k}^{\Dt_i}}$ 
denote the space of all real vectors indexed by power vectors $\alpha\in \N_{2k}^{\Dt_i}$. 
Given  $y_{\Dt_i} \in\re^{\N_{2k}^{\Dt_i}}$, 
for a degree $k_0 \le k$, we define the truncation 
\be \label{trun:2k0}
y_{\Dt_i}|_{2k_0} \, \coloneqq \, 
(y_{\Dt_i})_{\alpha \in \N_{2k_0}^{\Dt_i} } .
\ee
A vector $y_{\Dt_i}\in\re^{\N_{2k}^{\Dt_i}}$ naturally defines the Riesz functional on $\re[x_{\Dt_i}]_{2k}$ such that
\[
\mathscr{R}_{y_{\Dt_i}}(x_{\Dt_i}^\alpha) \coloneqq (y_{\Dt_i})_{\alpha}
\quad \mbox{for}\quad \alpha\in \N_{2k}^{\Dt_i}.
\]
This gives the bilinear operation between $\re[x_{\Dt_i}]_{2k}$ and $\re^{\N_{2k}^{\Dt_i}}$:
\be \label{eq:bilinear}
\langle p, y_{\Dt_i}\rangle \, \coloneqq \, \mathscr{R}_{y_{\Dt_i}}(p).
\ee
For $p\in \re[x_{\Dt_i}]$ and $y_{\Dt_i}\in \re^{\N_{2k}^{\Dt_i}}$,
with $2k\ge \deg(p)$, the $k$th order localizing matrix associated with $p$
and $y_{\Dt_i}$ is
\[
L_p^{\Dt_i,k}[y_{\Dt_i}]  \, \coloneqq  \,
\mathscr{R}_{y_{\Dt_i}}
\big( p(x_{\Dt_i})[x_{\Dt_i}]_{k_1}[x_{\Dt_i}]_{k_1}^T \big),
\]
where $k_1=\lceil k-\deg(p)/2\rceil$.
In particular, when $p=1$ is the constant one polynomial,
the corresponding localizing matrix is called the $k$th order moment matrix
\[
M_{\Dt_i}^{(k)}[y_{\Dt_i}] \coloneqq L_1^{\Dt_i, k}[y_{\Dt_i}].
\]
For instance, when $n=4$, $\Dt_1 = \{1,2\}$, $\Dt_2 = \{2,3,4\}$, we have
\[
\begin{array}{l}
\N_2^{\Dt_1} = \{(0,0,0,0),\, (1,0,0,0),\,(0,1,0,0),\,
(2,0,0,0),\,(1,1,0,0),\, (0,2,0,0)\},\\
\N_2^{\Dt_2} = \{
(0,0,0,0),\, (0,1,0,0),\, (0,0,1,0),\,(0,0,0,1),\,
(0,2,0,0),\,(0,1,1,0),\\
\quad \quad \quad \,\,(0,1,0,1)\,(0,0,2,0),\,(0,0,1,1),\,(0,0,0,2) \},\\
\N_2^{\Dt_1}\cap \N_2^{\Dt_2} =
\{(0,0,0,0),\, (0,1,0,0),\,(0,2,0,0)\}, 
\end{array}
\]
\[
M_{\Dt_1}^{(1)}[y_{\Dt_1}] = \bbm
y_{0000} & y_{1000} & y_{0100} \\
y_{1000} & y_{2000} & y_{1100}\\
y_{0100} & y_{1100} & y_{0200}\\
\ebm, \,\,
L_{x_2}^{\Dt_2,2}[y_{\Dt_2}] = \bbm
y_{0100} & y_{0200} & y_{0110} & y_{0101}\\
y_{0200} & y_{0300} & y_{0210} & y_{0201}\\
y_{0110} & y_{0210} & y_{0120} & y_{0111}\\
y_{0101} & y_{0201} & y_{0111} & y_{0102}
\ebm.
\]
For a tuple of sparse polynomials 
$g_i = (g_{i,1},\ldots, g_{i,l_i})$ in $x_{\Dt_i}$, we denote
\[
L_{g_i}^{\Dt_i, k}[y_{\Dt_i}] \coloneqq
\operatorname{diag}\big(
L_{g_{i,1}}^{\Dt_i,k}[y_{\Dt_i}],\ldots, L_{g_{i,l_i}}^{\Dt_i,k}[y_{\Dt_i}]
\big),
\]
where $\operatorname{diag}(\cdot)$ refers to the operation of block diagonal matrix. We remark that if $M_{\Dt_i}^{(k)}[y_{\Dt_i}] \succeq 0$ and 
$L_{x_{\Dt_i}}^{\Dt_i, k}[y_{\Dt_i}] \succeq 0$, then 
\be \label{dual:QM:Dti}
\langle p, y_{\Dt_i} \rangle \,\, \ge  \,\, 0 
\quad \text{for all} \quad 
p \in \qmod_{\Dt_i}[x_{\Dt_i}]_{2k}.
\ee
We refer to \cite[Sec.~2.5]{NieBook} for this fact.

\section{The sparse Moment-SOS relaxations}
\label{sec:spmomsos}

In this section, we propose sparse Moment-SOS relaxations to solve the
sparse copositive optimization problem \eqref{eq:spa_pop}.
Let
\[
d \coloneqq \deg(f), \quad 
k_0 \coloneqq \lceil d/2\rceil.
\]
For an integer $k\ge k_0$, denote the label set
\[
\mathbb{U}_k \coloneqq \N_{2k}^{\Dt_1}\cup\cdots\cup \N_{2k}^{\Dt_m}.
\]
Since $\cup_{i=1}^m \Dt_i = [n]$, the set of canonical basis vectors $\{e_1,\ldots, e_n\}$ in $\N^n$
is a proper subset of $\mathbb{U}_k$.
We let $\re^{\mathbb{U}_k}$ denote the space of vectors $y$
that are labeled by $\af \in \mathbb{U}_k$.
Such $y$ is called a sparse truncated multi-sequence (tms).
Denote by $y_{\Dt_i}$ the subvector of $y$ that is indexed by  
$\af \in \N_{2k}^{\Dt_i}$, i.e., 
\[
y_{\Dt_i} \, \, = \, \, (y_\af)_{ \af \in \N_{2k}^{\Dt_i} }. 
\]
Define the projection map:
\be \label{map:pi}
\pi: \re^{\mathbb{U}_k}\to \re^n, \quad 
y \mapsto (y_{e_1},\ldots, y_{e_n}).
\ee
For Lagrange multiplier variables $\lmd \in \re^{m_1}$ and $\mu \in \re_+^{m_2}$, we denote the polynomial
\be \label{eq:g}
g(x;\lambda, \mu) \, \coloneqq \, \lambda^T(Ax-b)+\mu^T(Cx-d).
\ee
Since $\cup_{i=1}^m \Dt_i = [n]$, $x \ge 0$ is equivalent to $x_{\Dt_i} \ge 0$ for every $i$. To solve \eqref{eq:spa_pop}, for a relaxation order $k$,
we propose the sparse moment relaxation:
\be \label{eq:spcopmom}
\left\{\begin{array}{cl}
\min &
\langle f, y\rangle  \coloneqq  \langle f_1, y_{\Dt_1}\rangle +
\cdots + \langle f_m, y_{\Dt_m}\rangle \\
\st	& A\pi(y) = b,\,C\pi(y) \ge d,  \\ 
& M_{\Dt_i}^{(k)}[y_{\Dt_i}] \succeq 0,\, i =1,\ldots, m, \\
& L_{x_{\Dt_i}}^{\Dt_i,k}[y_{\Dt_i}]\succeq 0,\, i =1,\ldots, m,\\
& y\in\re^{\mathbb{U}_k},\, y_{0} = 1 . 
\end{array}
\right.
\ee
This is a semidefinite program.  
The dual problem of \eqref{eq:spcopmom} is the $k$th order sparse SOS relaxation
\be \label{eq:spcopsos}
\left\{
\begin{array}{cl}
\max & \quad \gamma\\
\st &  \quad  f -g(x;\lambda,\mu)-\gamma\in \qmod[x]_{spa,2k}, \\
&  \quad \lambda\in\re^{m_1},\, \mu\in \re_+^{m_2},\,
\gamma\in\re .
\end{array}
\right.
\ee 
As $k \to \infty$, the sequence of the primal dual pair \eqref{eq:spcopmom}--\eqref{eq:spcopsos} is called the
sparse Moment-SOS hierarchy for solving \eqref{eq:spa_pop}.
The optimal values of \reff{eq:spcopmom} and \reff{eq:spcopsos}
are denoted as $f_{k}^{smo}$ and $f_k^{spa}$ respectively. 
Recall that $f_{min}$ denotes the optimal value of \eqref{eq:spa_pop}.
By the weak duality, it holds 
\be \label{bd:mom-sos}
f_k^{spa} \, \le \, f_k^{smo} \, \le \,  f_{min} .
\ee
If $f_k^{spa} = f_{min}$ for some integer $k$,
we say the $k$th order SOS relaxation \reff{eq:spcopsos} is tight.
Similarly, the $k$th order moment relaxation \eqref{eq:spcopmom} is said to be tight if $f_k^{smo} = f_{min}$. 
The sparse Moment-SOS hierarchy is said to be {\it tight} (or to have {\it finite convergence}) if $f_k^{smo} = f_k^{spa} = f_{min}$ for some relaxation order $k$.

In the following, we investigate conditions for the sparse Moment-SOS relaxations \eqref{eq:spcopmom}--\eqref{eq:spcopsos} to be tight.
Recently, sparse Moment-SOS relaxations have been extensively studied in
\cite{HuangKang25,korda2025convergence,Lasspacov06,MagronWang23,vmjw21,NieDemmel09,QuTang24,CSTSSOS,waki2006sums}.
In particular, necessary and sufficient conditions
for tightness of sparse Moment-SOS relaxations are given in \cite{NieQuTangZhang25,NQTZ25mat}.
We remark that linear constraints in the sparse copositive optimization
problem \eqref{eq:spa_pop} can be dense, so it is not a traditional sparse polynomial optimization problem. In \eqref{eq:spcopsos}, the sparse quadratic module is used. The tightness of sparse SOS relaxations is characterized as follows.

\begin{thm} \label{thm:tight:char}
For the $k$th order sparse SOS relaxation \eqref{eq:spcopsos}, we have:
\begin{enumerate}
\item[(i)] For a relaxation order $k\ge k_0$, there exist
$\lambda\in\re^{m_1}$ and $\mu\in\re_+^{m_2}$ such that
\[
f - g(x; \lambda, \mu)-f_{min}\in \qmod[x]_{spa, 2k}
\]
if and only if there exist sparse polynomials $p_i\in\re[x_{\Dt_i}]_{2k}$
such that
\be   \label{eq:pfdecomp}
\left\{
\begin{array}{l}
	p_1+\cdots+p_m + g(x;\lambda,\mu)+f_{min} = 0,\\
	f_i+p_i\in\qmod_{\Dt_i}[x_{\Dt_i}]_{2k},\quad  i=1,\ldots, m.
\end{array}
\right.
\ee

\item[(ii)] Suppose \eqref{eq:pfdecomp} holds for some $\lambda\in\re^{m_1}$ 
and $\mu\in\re_+^{m_2}$. Then, the optimal value of \eqref{eq:spa_pop} is attainable
if and only if there exists $u\in K$ such that
$g(u;\lambda,\mu) = 0$ and
$f_i(u_{\Dt_i})+p_i(u_{\Dt_i}) = 0$ for all $i\in[m]$.

\item[(iii)] The $k$th order sparse SOS relaxation \eqref{eq:spcopsos} 
is tight if and only if for every $\epsilon>0$,
there exist $p_i\in\re[x_{\Dt_i}]_{2k}$,
$\lambda\in\re^{m_1}$ and $\mu\in\re_+^{m_2}$ such that
\be \label{eq:tight_not_attain}
\left\{\begin{array}{l}
	p_1+\cdots+p_m+g(x;\lambda,\mu)+f_{min} = 0,\\
	f_i+p_i+\epsilon\in \qmod_{\Dt_i}[x_{\Dt_i}]_{2k},
	\quad i = 1, \ldots, m.
\end{array}
\right.
\ee
\end{enumerate}
\end{thm}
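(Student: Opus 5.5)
Each part comes from unpacking the definition of $\qmod[x]_{spa,2k}$ as a sum of the sparse truncated modules. For (i), the direction ($\Leftarrow$) is summation. If \eqref{eq:pfdecomp} holds, put $q_i \coloneqq f_i+p_i\in\qmod_{\Dt_i}[x_{\Dt_i}]_{2k}$. Then $\sum_i q_i = f + \sum_i p_i = f - g(x;\lambda,\mu) - f_{min}$, and this lies in $\qmod[x]_{spa,2k}$. For ($\Rightarrow$), write $f-g-f_{min} = q_1+\cdots+q_m$ with $q_i\in \qmod_{\Dt_i}[x_{\Dt_i}]_{2k}$, and set $p_i \coloneqq q_i - f_i$. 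Then $p_i\in\re[x_{\Dt_i}]_{2k}$, because $\deg f_i\le d\le 2k_0\le 2k$ and $q_i$ has degree at most $2k$. Also $\sum_i p_i = (f-g-f_{min}) - f$, so $\sum_i p_i + g + f_{min}=0$. The only small point is the degree bound on $p_i$, which needs $k\ge k_0$.

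For (ii), I evaluate the identity in \eqref{eq:pfdecomp} at a point of $K$. For $x\in K$ every $q_i(x_{\Dt_i})=f_i+p_i\ge 0$, since $x_{\Dt_i}\ge 0$ and $q_i$ lies in the sparse quadratic module. Moreover $g(x;\lambda,\mu)=\mu^T(Cx-d)\ge 0$, because $Ax=b$ and $\mu\ge 0$. Summing the identity gives
\[
f(x)-f_{min} \;=\; g(x;\lambda,\mu)+\sum_i \big(f_i(x_{\Dt_i})+p_i(x_{\Dt_i})\big),
\]
a sum of nonnegative terms. Hence $f(u)=f_{min}$ for some $u\in K$ if and only if all terms vanish at $u$, which is exactly the stated condition. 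Both directions follow at once.

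For (iii), tightness means that $f_k^{spa}=f_{min}$, i.e.\ the supremum in \eqref{eq:spcopsos} equals $f_{min}$; it need not be attained. Suppose first that it is tight and fix $\epsilon>0$. Pick a feasible $(\gamma,\lambda,\mu)$ with $\gamma\ge f_{min}-\epsilon$. I use that $\qmod[x]_{spa,2k}$ is closed under adding nonnegative constants, since constants lie in $\Sig[x_{\Dt_1}]_0$; no lower bound on $\gamma$ is needed because $f_k^{spa}\le f_{min}$ by \eqref{bd:mom-sos}. Then $f-g-f_{min}+\epsilon = (f-g-\gamma)+(\gamma-f_{min}+\epsilon)\in\qmod[x]_{spa,2k}$. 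Decomposing as $\sum_i q_i$ and setting $p_i \coloneqq q_i-f_i-\epsilon/m$ gives $f_i+p_i+\epsilon/m\in\qmod_{\Dt_i}$, hence also $f_i+p_i+\epsilon \in \qmod_{\Dt_i}$ after adding the constant $\epsilon(1-1/m)$. The linear identity also holds, since $\sum_i p_i = f-g-f_{min}+\epsilon-f-\epsilon = -g-f_{min}$. Conversely, given \eqref{eq:tight_not_attain} for $\epsilon$, summing gives $f-g-(f_{min}-m\epsilon)\in\qmod[x]_{spa,2k}$. So $f_k^{spa}\ge f_{min}-m\epsilon$ for every $\epsilon>0$, and combined with \eqref{bd:mom-sos} this yields equality.

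I expect no real obstacle here. The step needing the most care is the bookkeeping of $\epsilon$ in (iii): distributing the constant among the $m$ blocks, and checking that the factor $m$ in the converse is harmless because $\epsilon$ is arbitrary.
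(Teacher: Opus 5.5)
Your proposal is correct and follows essentially the same route as the paper: (i) by regrouping with $p_i = q_i - f_i$, (ii) by evaluating the linear identity on $K$, where $g(x;\lambda,\mu)\ge 0$ and each $f_i+p_i\ge 0$, and (iii) by summing the $\epsilon$-perturbed memberships in one direction and using downward-closedness of the feasible $\gamma$'s in the other. The only difference is bookkeeping in (iii). The paper takes $\gamma = f_{min}-m\epsilon$ as feasible and sets $p_i = q_i-\epsilon$, while you take $\gamma\ge f_{min}-\epsilon$, split off $\epsilon/m$ per block and pad with $\epsilon(1-1/m)$; both work, and your remark that nonnegative constants lie in the module justifies a step the paper leaves implicit.
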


We remark that the item (i) concerns tightness of  the sparse SOS relaxation \eqref{eq:spcopsos} when it achieves the optimal value,
while the item (iii) concerns tightness  when \eqref{eq:spcopsos} does not achieve the optimal value.

\medskip 
\noindent
{\it Proof of Theorem~\ref{thm:tight:char}}
(i) If \eqref{eq:pfdecomp} holds, then
\[
f - g(x;\lambda, \mu) - f_{min} = \sum_{i=1}^m (f_i+ p_i )
\in \qmod[x]_{spa,2k}.
\]
Conversely, suppose there exist polynomials $q_i\in \qmod_{\Dt_i}[x_{\Dt_i}]_{2k}$ such that
\[ f-g(x;\lambda,\mu)-f_{min} = q_1+\cdots+q_m .\]
Then \eqref{eq:pfdecomp} holds for $p_i = q_i-f_i$.

(ii) Suppose \eqref{eq:pfdecomp} holds.
If $u\in K$ is a common zero of $g(x;\lambda,\mu) = 0$ and
$f_i+p_i = 0$ for all $i\in[m]$,
then it is an optimizer of \eqref{eq:spa_pop} since
\[\begin{aligned}
	f(u) &= \sum\limits_{i=1}^m \big[f_i(u_{\Dt_i})+p_i(u_{\Dt_i}) \big]
	-\sum\limits_{i=1}^m p_i(u_{\Dt_i}) \\
	& = f_{min}+g(u;\lambda,\mu) = f_{min}.
\end{aligned}\]
For the ``only if'' direction, suppose $u\in K$ is an optimizer of \eqref{eq:spa_pop},
i.e., $f(u) = f_{min}$.
Then, by evaluating the equality in \eqref{eq:pfdecomp}
at $ x = u $, we get
\[
-g(u;\lambda,\mu) = f(u) + \sum\limits_{i=1}^m p_i(u_{\Dt_i}) =
\sum\limits_{i=1}^m [ f_i(u_{\Dt_i})+p_i(u_{\Dt_i}) ] \ge 0,
\]
where the inequality is implied by $u\in K$
and $f_i+p_i\in \qmod_{\Dt_i}[x_{\Dt_i}]_{2k}$.
Since $g(u;\lambda, \mu)\ge 0$ by its definition,
the above inequality must be an equality,
hence $g(u;\lambda, \mu)=0$ and
$f_i(u_{\Dt_i})+ p_i(u_{\Dt_i})=0$ for every $i$.

(iii) If \eqref{eq:tight_not_attain} holds, then 
\begin{eqnarray*}
 &&	f-g(x;\lambda,\mu)-(f_{min}-m\epsilon)  \\ 
&=&	f+\sum\limits_{i=1}^m p_i +m\epsilon
	= \sum\limits_{i=1}^m(f_i+p_i+\epsilon)
	\in\qmod[x]_{spa,2k}.
\end{eqnarray*} 
This implies $\gm = f_{min}-m\epsilon$ is feasible for \eqref{eq:spcopsos}.
Thus, for arbitrary $\epsilon > 0$, it holds 
\[ f_{min}\ge f_k^{spa}\ge f_{min}-m\epsilon .\]
So, the $k$th order sparse SOS relaxation \eqref{eq:spcopsos} is tight.
Conversely, if \eqref{eq:spcopsos} is tight, 
then $\gm = f_{min}- m\epsilon$ is feasible for \eqref{eq:spcopsos} for every $\epsilon>0$. 
Like in (i), we can similarly show that there exist polynomials $q_i\in\re[x_{\Dt_i}]_{2k}$ such that
\[
\left\{
\begin{array}{l}
	q_1 +\cdots+q_m +g(x;\lambda,\mu)+(f_{min}-m\epsilon) = 0,\\
	f_i+q_i\in \qmod_{\Dt_i}[x_{\Dt_i}]_{2k},\quad i = 1, \ldots, m.
\end{array}
\right.
\]
For each $i \in [m]$, let 
\[
p_i(x_{\Dt_i}) = q_i(x_{\Dt_i}) - \epsilon\in\re[x_{\Dt_i}]_{2k} .
\]
Then, \eqref{eq:tight_not_attain} follows from the above.
\qed
%%\end{proof}

In computational practice, the optimal value $f_{min}$ is usually not known in advance. So, we typically verify tightness by examining the optimizer of the moment relaxation \eqref{eq:spcopmom}.

\begin{thm}\label{thm:rank1_tight}
Suppose $y^*$ is a minimizer of the sparse moment relaxation
\eqref{eq:spcopmom} with order $k$.
If $\rank\, M_{\Dt_i}^{(k_0)}[y^*_{\Dt_i}] = 1$ for all $i\in [m]$,
then $f_k^{smo} = f_{min}$ and $x^* \coloneqq \pi(y^*)$ is a minimizer of \eqref{eq:spa_pop}.
\end{thm}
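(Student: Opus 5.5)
The plan is to show that $x^* \coloneqq \pi(y^*)$ is feasible for \eqref{eq:spa_pop} and that $f(x^*) = \langle f, y^*\rangle = f_k^{smo}$. Combined with the bound $f_k^{smo}\le f_{min}$ from \eqref{bd:mom-sos}, this gives $f_{min}\le f(x^*) = f_k^{smo}\le f_{min}$, so equality holds throughout and $x^*$ is a minimizer.

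The first step extracts a point from each rank-one moment matrix. Fix $i\in[m]$. Since $M_{\Dt_i}^{(k_0)}[y^*_{\Dt_i}]\succeq 0$ is a principal submatrix of $M_{\Dt_i}^{(k)}[y^*_{\Dt_i}]$, it has rank one, and its $(0,0)$ entry is $y^*_0=1$. Hence it equals $vv^T$ with $v_0=1$. Reading off the first row, $v = [u^{(i)}]_{k_0}$, where $u^{(i)}\in\re^{\Dt_i}$ has entries $u^{(i)}_j = y^*_{e_j}$ for $j\in\Dt_i$. I will do this by the standard argument: the entries of $vv^T$ must match the Hankel structure, and the entry at $(\alpha,\beta)$ equals $v_\alpha v_\beta$, so induction on degree gives $v_\alpha = (u^{(i)})^\alpha$. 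It follows that $(y^*_{\Dt_i})_\alpha = (u^{(i)})^\alpha$ for all $|\alpha|\le 2k_0$. The entries $y^*_{e_j}$ are shared across blocks, so the points $u^{(i)}$ agree on overlaps. They are all restrictions of the single vector $x^*=\pi(y^*)$, that is, $u^{(i)} = x^*_{\Dt_i}$.

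The second step is feasibility of $x^*$. The constraints $A\pi(y^*)=b$ and $C\pi(y^*)\ge d$ are exactly $Ax^*=b$ and $Cx^*\ge d$. For $x^*\ge 0$, take $j\in\Dt_i$ for some $i$ (possible because the $\Dt_i$ cover $[n]$). The $(0,0)$ diagonal entry of $L_{x_j}^{\Dt_i,k}[y^*_{\Dt_i}]\succeq 0$ is $y^*_{e_j}\ge 0$. Hence $x^*\in K$.

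The third step evaluates the objective. Since $\deg f_i\le d\le 2k_0$, linearity and the first step give $\langle f_i, y^*_{\Dt_i}\rangle = f_i(x^*_{\Dt_i})$. Summing over $i$ yields $f_k^{smo}=\langle f,y^*\rangle = f(x^*)$, and the sandwich argument above completes the proof.

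The main (though mild) obstacle is the first step when $d$ is odd or $k_0$ is small. The rank-one truncation only determines moments up to degree $2k_0$, and this suffices precisely because $\deg f\le 2k_0$. One must also check that the rank-one factorization is really a moment vector; this is where the normalization $y_0=1$ and the Hankel structure are used.
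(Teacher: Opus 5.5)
Your proposal is correct and follows the same route as the paper: the rank-one condition forces $y^*_{\Dt_i}|_{2k_0} = [x^*_{\Dt_i}]_{2k_0}$, so $f_k^{smo} = \langle f, y^*\rangle = f(x^*) \ge f_{min}$, and the reverse inequality \eqref{bd:mom-sos} gives equality. You also fill in details the paper leaves implicit: the induction extracting the point from $vv^T$, the consistency of the blocks through the shared coordinates $y^*_{e_j}$, and $x^*\ge 0$ from the diagonal entries of the localizing matrices.
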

\begin{proof}
Note $x^*$ is a feasible point of \eqref{eq:spa_pop}.
If $\rank\, M_{\Dt_i}^{(k_0)}[y_{\Dt_i}^*] = 1$ for every $i$, then 
\[
y_{\Dt_i}^*|_{2k_0} = [x_{\Dt_i}^*]_{2k_0}, \quad 
\langle f_i, y_{\Dt_i}^*\rangle  \, =  \,  f_i(x^*_{\Dt_i}), 
\]
where $y_{\Dt_i}^*|_{2k_0}$ denotes the $(2k_0)$th degree truncation of $y_{\Dt_i}^*$ 
as in \reff{trun:2k0}. Since $2k\ge 2k_0\ge \deg(f)$, it holds
\[
f_k^{smo}
= \sum\limits_{i=1}^m \langle f_i, y_{\Dt_i}^*\rangle
= \sum\limits_{i=1}^m f_i(x^*_{\Dt_i}) = f(x^*)\ge f_{min}.
\]
Since $f_k^{smo}\le f_{min}$ by \reff{bd:mom-sos},
the above implies $f_k^{smo} = f_{min}$ and this optimal value is
achieved at $x^*$. So, $x^*$ is a global minimizer.
\end{proof}

The following is an example to illustrate the above theorem.

\begin{Example} \rm 
Consider the sparse copositive optimization problem
\[
\left\{
\baray{cl}
\min & f(x) = f_1(x_{\Dt_1}) + f_2(x_{\Dt_2}) + f_3(x_{\Dt_3}) \\
\st &   x_1 + x_2 + x_3 = 3 , \\    
	&   2x_1 + x_2 + x_3 = 4,\, x \in \re_+^3,
\earay
\right.
\]
where $\Dt_1 = \{1,2\}$, $\Dt_2 = \{2,3\}$,  $\Dt_3 = \{1,3\}$
and
\[
\begin{aligned}
	f_1(x_1, x_2) &= x_1^4 + x_1^2x_2^2 - 2x_1^2x_2 - 2x_1x_2 + x_2^2 + x_1 , \\
	f_2(x_2, x_3) &= x_2^4 + x_2^2x_3^2 - 2x_2^2x_3 - 2x_2x_3 + x_3^2 + 3x_2 , \\
	f_3(x_1, x_3) &= x_3^4 + x_1^2x_3^2 - 2x_1x_3^2 - 2x_1x_3 + x_1^2 + 3x_3 .
\end{aligned}
\]
Note $\deg(f) = 4$, $k_0 = 2$ and there is no Lagrange multiplier $\mu$ for $Cx \ge d$. One can verify that $\gamma = 4$ and $\lambda = (5,-2)$ are feasible
for the SOS relaxation \eqref{eq:spcopsos} for $k=2$, since   
\begin{eqnarray*}
& & f(x)-g(x;\lambda)-\gamma \\
&=& f(x) -(5(x_1+x_2+x_3-3) - 2(2x_1+x_2+x_3-4)) - 4   \\
&=& \underbrace{(x_1^2 - x_2)^2 + (x_1x_2 - 1)^2}_{\in \qmod_{\Dt_1}[x_{\Dt_1}]_{4}}
+ \underbrace{(x_2^2 - x_3)^2 + (x_2x_3 - 1)^2}_{\in \qmod_{\Dt_2}[x_{\Dt_2}]_{4}}\\
&& \qquad + \underbrace{(x_3^2 - x_1)^2 + (x_1x_3 - 1)^2}_{\in\qmod_{\Dt_3}[x_{\Dt_3}]_{4}}.
\end{eqnarray*}
Hence, $\gamma = 4 \le f_{min}$.
Solving the sparse moment relaxation \eqref{eq:spcopmom} for $k=2$,
we get $f_2^{smo} = 4$ and the corresponding
optimizer $y^*\in\re^{\mathbb{U}_2}$ satisfies
\[
\begin{gathered}[c]  
	\rank\,M_{\Dt_1}^{(2)}[y^*_{\Dt_1}] \,=\,
	\rank\, M_{\Dt_2}^{(2)}[y^*_{\Dt_2}]
	\,=\, \rank\,  M_{\Dt_3}^{(2)}[y^*_{\Dt_3}]= 1,  \\ 
	\pi(y^*) = (y_{100},\, y_{010},\, y_{001}) =  (1, 1, 1). 
\end{gathered}
\]
By Theorem~\ref{thm:rank1_tight}, we know $f_{min} = 4$
and $\pi(y^*) = (1,1,1)$ is the global optimizer for this example.
\end{Example}

The rank one condition in Theorem~\ref{thm:rank1_tight} can be weakened, to verify tightness of the sparse moment relaxation \eqref{eq:spcopmom}.
When $M_{\Dt_i}^{(k_0)}[y_{\Dt_i}^*]$ has rank higher than one, 
the truncation $y_{\Dt_i}^*|_{2k_0}$ may still admits a finitely atomic measure supported in $\re_+^{\Dt_i}$. 
In computational practice, we can use the {\it flat truncation} \cite{NieFlat13} to verify this. 
Suppose there is an integer $t\in[k_0,k]$ such that
\be \label{FT:t-1}
\rank \, M_{\Dt_i}^{(t-1)}[y_{\Dt_i}^*] \,\,  = \,\,  \rank \, M_{\Dt_i}^{(t)}[y_{\Dt_i}^*] = r_i > 0, 
\ee
then there exist $r_i$ distinct points $u^{(i,j)} \in K$ and scalars $\theta_{i,j}>0$ such that
\be   \label{eq:y_delta_decomp}
y_{\Dt_i}^*|_{2t} = \sum_{j=1}^{r_i} \theta_{i,j} [u^{(i,j)}]_{2t},\quad
\sum\limits_{j=1}^{r_i} \theta_{i,j} = 1.
\ee 
We refer to \cite{NieFlat13} for the above. 
When \eqref{eq:y_delta_decomp} holds, we denote the support
\[
\supp{y_{\Dt_i}^*|_{2t}} \, \coloneqq \,  \{u^{(i,1)},\ldots, u^{(i,r_i)}\}.
\]
It is uniquely determined by the flat truncation condition 
(see \cite[Theorem~2.7.1]{NieBook}). The decomposition \eqref{eq:y_delta_decomp} alone is not enough to certify the tightness of the sparse moment relaxation \eqref{eq:spcopmom}. We need some compatibility assumption about points in $\supp{y_{\Dt_i}^*|_{2t}}$ to ensure its tightness.

\begin{thm}  \label{thm:multipoints}
Assume \eqref{eq:spa_pop} has no inequality constraint $Cx\ge d$ but $x \ge 0$ still appears. Suppose $y^*$is a minimizer of the sparse moment relaxation \eqref{eq:spcopmom} 
and there exists $t\in [k_0,k]$ such that \eqref{eq:y_delta_decomp} holds for all $i\in[m]$. If there is a point $x^*\in K$ such that $x_{\Dt_i}^*\in \supp{ y_{\Dt_i}^*|_{2t} }$ for all $i$ and $f_t^{smo} = f_k^{smo}$, then $x^*$ is a minimizer of \eqref{eq:spa_pop}.
\end{thm}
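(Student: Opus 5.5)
The plan is to work with the order-$t$ moment relaxation and show $f(x^*)\le f_t^{smo}$; combined with $f_t^{smo}\le f_{min}$ from \reff{bd:mom-sos} and $x^*\in K$, this forces $f(x^*)=f_{min}$.

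\textbf{Step 1: the truncation is optimal at order $t$.} Let $\hat y\in\re^{\mathbb{U}_t}$ be the truncation of $y^*$ to labels in $\mathbb{U}_t$, so $\hat y_{\Dt_i}=y^*_{\Dt_i}|_{2t}$.
\begin{itemize}
\item $M_{\Dt_i}^{(t)}[\hat y_{\Dt_i}]$ and $L_{x_{\Dt_i}}^{\Dt_i,t}[\hat y_{\Dt_i}]$ are principal submatrices of the corresponding order-$k$ matrices, so they are positive semidefinite.
\item $\pi(\hat y)=\pi(y^*)$ and $\hat y_0=1$.
\end{itemize}
Hence $\hat y$ is feasible for \reff{eq:spcopmom} at order $t$. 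Since $2t\ge 2k_0\ge\deg(f)$, we have $\langle f,\hat y\rangle=\langle f,y^*\rangle=f_k^{smo}=f_t^{smo}$. So $\hat y$ is a minimizer of the order-$t$ relaxation.

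\textbf{Step 2: remove part of the atom at $x^*$.} By \eqref{eq:y_delta_decomp}, $\hat y_{\Dt_i}=\sum_j\theta_{i,j}[u^{(i,j)}]_{2t}$ with all $\theta_{i,j}>0$. By hypothesis, for each $i$ there is an index $j_i$ with $u^{(i,j_i)}=x^*_{\Dt_i}$. Let $w\in\re^{\mathbb{U}_t}$ be the sparse tms of the point $x^*$, i.e. $w_\af=(x^*)^\af$ for $\af\in\mathbb{U}_t$.

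Choose $0<s<\min_i\theta_{i,j_i}$ (note $s<1$) and set
\[
z \coloneqq \frac{1}{1-s}\big(\hat y - s\, w\big).
\]
Both $\hat y$ and $w$ are single vectors indexed by $\mathbb{U}_t$, so $z$ is automatically consistent on overlapping labels $\N_{2t}^{\Dt_i}\cap\N_{2t}^{\Dt_j}$.

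For each $i$,
\[
z_{\Dt_i}=\frac{1}{1-s}\Big(\sum_{j\ne j_i}\theta_{i,j}[u^{(i,j)}]_{2t}+(\theta_{i,j_i}-s)[x^*_{\Dt_i}]_{2t}\Big).
\]
This is a nonnegative combination of Dirac tms with total weight $1$. The atoms lie in $\re_+^{\Dt_i}$, since the support obtained from flat truncation lies in the region cut out by the localizing constraints $x_{\Dt_i}\ge 0$. Therefore:
\begin{itemize}
\item $M^{(t)}_{\Dt_i}[z_{\Dt_i}]\succeq 0$ and $L^{\Dt_i,t}_{x_{\Dt_i}}[z_{\Dt_i}]\succeq 0$;
\item $z_0=1$;
\item $A\pi(z)=(b-sAx^*)/(1-s)=b$, because $Ax^*=b$.
\end{itemize}
This is exactly where the absence of $Cx\ge d$ matters. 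The analogous combination $(C\pi(\hat y)-sCx^*)/(1-s)$ need not be $\ge d$, since subtracting can break an inequality. Equalities, by contrast, are preserved under affine combinations.

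\textbf{Step 3: compare objective values.} $z$ is feasible for the order-$t$ relaxation, and $\hat y$ is optimal there, so
\[
f_t^{smo}\le\langle f,z\rangle=\frac{f_t^{smo}-s f(x^*)}{1-s}.
\]
Multiplying by $1-s>0$ and rearranging gives $f(x^*)\le f_t^{smo}\le f_{min}$. Since $x^*\in K$, also $f(x^*)\ge f_{min}$. Hence $x^*$ is a minimizer.

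\textbf{Main obstacle.} The delicate point is Step 2. One must verify that subtracting a small multiple of the Dirac tms at $x^*$ keeps every block a genuine nonnegative atomic combination. This relies on two facts: every weight $\theta_{i,j_i}$ is strictly positive, and the atoms are nonnegative. One must also verify that the result remains a single consistent sparse tms, which holds because $w$ is built from one global point $x^*$. This compatibility is precisely why the hypothesis demands a common $x^*$ with $x^*_{\Dt_i}\in\supp{y^*_{\Dt_i}|_{2t}}$ for all $i$.
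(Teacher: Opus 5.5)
Your proof is correct and is essentially the paper's argument. The paper also writes $y^*|_{2t}$ as a convex combination of the Dirac tms of $x^*$ (your $w$), with weight $\rho=\min_i\rho_i$, and a remainder $\tilde y$. It shows $\tilde y$ is feasible at order $t$ (only $A\pi(\tilde y)=b$ needs checking, which is why $Cx\ge d$ is excluded), and then uses $f_t^{smo}=f_k^{smo}$ to get $f(x^*)=f_k^{smo}\le f_{min}$. The differences are minor: the paper takes $\rho=\min_i\rho_i$ exactly, so it must treat $\rho=1$ separately via Theorem~\ref{thm:rank1_tight}, and it cites \cite{NieQuTangZhang25} for the semidefiniteness of the remainder; your strict choice $s<\min_i\theta_{i,j_i}$ removes that special case, and your explicit atomic form with nonnegative atoms makes the semidefiniteness immediate.
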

\begin{proof}
Since \eqref{eq:y_delta_decomp} holds for each $i$, we have
\[
M_{\Dt_i}^{(t)}[y^*_{\Dt_i}] = \rho_i [x_{\Dt_i}^*]_{t}[x_{\Dt_i}^*]_{t}^T + W_{\Dt_i}
\]
for a positive scalar $\rho_i >0$ and a sparse moment matrix $W_{\Dt_i} \succeq 0$. Let
\[
\rho \coloneqq \min_{1 \le i \le m} \rho_i.
\]
Note $0 < \rho \le 1$. If $\rho = 1$, then $W_{\Dt_i} = 0$ and $\rank \, M_{\Dt_i}^{(t)}[y^*_{\Dt_i}]  = 1$ for every $i$, 
and the conclusion holds by Theorem~\ref{thm:rank1_tight}. 
In the following, we consider the case $0<\rho<1$.
Define 
\[
\hat{y} \coloneqq (\hat{y}_\alpha)_{\alpha \in \mathbb{U}_t} \quad \text{where each} \quad
\hat y_{\alpha}= \left(x_{\Dt_i}^*\right)^{\alpha} .
\]
Let $\tilde y \coloneqq  (\tilde y_\alpha)_{\alpha\in \mathbb{U}_t}$ be the tms such that
\[ y^*|_{2t} = \rho \hat y +(1-\rho)\tilde y . \]
Since $x^*\in K$, it is clear that $\hat{y}$ is feasible for (\ref{eq:spcopmom}) with the relaxation order equal to $t$.
As in the proof of Theorem~3.3 of \cite{NieQuTangZhang25}, we can show
\[
M_{\Dt_i}^{(t)}[\tilde{y}_{\Dt_i}] \succeq 0,\quad 
L_{x_{\Dt_i}}^{\Dt_i, t}[\tilde{y}_{\Dt_i}]\succeq 0,\quad i =1,\ldots, m. 
\]
Since $\tilde y = \frac{1}{1-\rho}(y^* - \rho \hat y)$, it holds
\[  
A\pi(\tilde{y}) = \frac{1}{1-\rho}\big( A\pi(y^*)-\rho A\pi(\hat y) \big) 
= \frac{1}{1-\rho}(b-\rho b) = b. 
\]
Thus, $\tilde{y}$ is also feasible for (\ref{eq:spcopmom}) with the relaxation order equal to $t$.
Since $f_t^{smo} =f_k^{smo} = \langle f, y^* \rangle$ and $t\ge k_0$, we have
\[
\langle f, y^* \rangle \le \langle f, \hat{y} \rangle,  \quad
\langle f, y^* \rangle \le \langle f, \tilde{y} \rangle.
\]
Since
$\langle f, y^* \rangle = \rho \langle f, \hat{y} \rangle
+ (1 - \rho) \langle f, \tilde{y} \rangle$ and $0<\rho<1$, it holds
\[
f_k^{smo} = \langle f, y^* \rangle = \langle f, \hat{y} \rangle = \langle f, \tilde{y} \rangle.
\]
Note that $\langle f, \hat{y} \rangle = f(x^*)$.
The conclusion then follows from (\ref{bd:mom-sos}).
\end{proof}

\begin{rmk}
In Theorem~\ref{thm:multipoints}, the conclusion may not hold if problem (\ref{eq:spa_pop}) has the additional inequality constraint $Cx \ge d$. When (\ref{eq:spa_pop}) has such inequality constraints, the relaxations \eqref{eq:spcopmom}--\eqref{eq:spcopsos} might not be tight and the point $x^*\in K$ in Theorem~\ref{thm:multipoints} may not be a minimizer, even if all other conditions in the theorem are satisfied.
For example, consider the case that $m = n = 1$ and
\[ f(x) = x^4-3x^2,\quad K = \{x\in \re_+^1: x\le 0.5\}.\]
The minimum value $f_{min} = -\frac{11}{16}$.
When $k = 2$, for $\mu = 2$, $\gm = -1$, we have
\[ f(x) - \mu (0.5-x) - \gm = (x^2-x)^2 + 2x(x-1)^2 \in \qmod[x]_4.\]
So, $f_2^{smo} \ge f_2^{spa} \ge -1$. 
On the other hand, if we let 
\[ y^* = \frac{1}{2}\big( [0]_4 + [1]_4 \big), \] 
then $y^*$ is feasible for (\ref{eq:spcopmom}) 
and $\langle f, y^* \rangle = -1$.
Thus, $y^*$ is a minimizer of the sparse moment relaxation (\ref{eq:spcopmom}) with $k = 2$, and (\ref{eq:y_delta_decomp}) holds for $t = k = 2$.
Also, $x^* = 0$ is in the support of $y^*$ and it is feasible for (\ref{eq:spa_pop}).
However, the minimum value of (\ref{eq:spcopmom}) is $-1 < -\frac{11}{16}$
and $f(x^*) = 0$, so the relaxations \eqref{eq:spcopmom}--\eqref{eq:spcopsos} are not tight and $x^*$ is not a minimizer.
\end{rmk}

In the following, we give a trick for checking tightness of the relaxations \eqref{eq:spcopmom}--\eqref{eq:spcopsos} and extracting minimizers for \reff{eq:spa_pop}, regardless of the conditions in Theorem~\ref{thm:multipoints} hold or not. Let $y^*$ be the minimizer of \eqref{eq:spcopmom} and suppose there exists $t\in[k_0,k]$ such that (\ref{FT:t-1}) holds for every $i=1,\ldots,m $. We apply the method in \cite{henrion2005detecting} to get the decomposition (\ref{eq:y_delta_decomp}) fo each $y_{\Dt_i}^*$.
Note that the decomposition is unique and $\supp{y_{\Dt_i}^*|_{2t}}$ 
of each $y_{\Dt_i}^*$ is a finite set.
There are finitely many points $x^*\in K$ such that every $x_{\Dt_i}^*\in \supp{ y_{\Dt_i}^*|_{2t} }$.
For every such $x^*\in K$, we check if $f(x^*) = f^{smo}_k$ holds or not.
If it holds for some $x^*$, then $f(x^*) = f_{min} = f^{smo}_k$, 
i.e., the relaxation \eqref{eq:spcopmom} is tight and $x^*$ is a global minimizer for \reff{eq:spa_pop}. The following example is an exposition for this trick.

\begin{Example}\rm
\label{ex:theorem_illustration_n4}  \rm 
Consider the sparse copositive optimization problem:
\begin{equation}
\left\{
\begin{array}{cl}
	\min  & f(x) =  f_1(x_{\Dt_1}) + f_2(x_{\Dt_2}) + f_3(x_{\Dt_3}) \\
	\st & x_1 + 2x_2 + 2x_3 + x_4 = 9, \\
	& x_1 + x_4 \ge 3, \, x\in\re_+^4.
\end{array}
\right.
\end{equation}
Here, $\Dt_1=\{1,2\}$, $\Dt_2=\{2,3\}$, $\Dt_3=\{3,4\}$, and
\begin{align*}
	f_1(x_1,x_2) &= x_1^2 x_2^2 + x_1^2 - 2x_1 x_2 + x_2^2 - 6x_1 - 6x_2, \\
	f_2(x_2,x_3) &= x_2^2 x_3^2 + x_2^2 - 2x_2 x_3 + x_3^2 - 6x_2 - 6x_3, \\
	f_3(x_3,x_4) &= x_3^2 x_4^2 + x_3^2 - 2x_3 x_4 + x_4^2 - 6x_3 - 6x_4.
\end{align*}
For the order $k= 2$, we solve the sparse moment relaxation \reff{eq:spcopmom} and get $f_2^{smo} = -39$.
The optimizer $y^*$ satisfies \reff{FT:t-1} and \eqref{eq:y_delta_decomp} with
\begin{align*}
y_{\Dt_1}^* &= \frac{1}{2}  \begin{bmatrix} 1\\2 \end{bmatrix} _{4} + \frac{1}{2}  \begin{bmatrix} 2\\1 \end{bmatrix} _{4}, \quad 	
\supp{y_{\Dt_1}^*} = \left\{ \begin{bmatrix} 1 \\ 2 \end{bmatrix}, \begin{bmatrix} 2 \\ 1 \end{bmatrix} \right\},\\
y_{\Dt_2}^* &= \frac{1}{2}  \begin{bmatrix} 2\\1 \end{bmatrix} _{4} + \frac{1}{2} \begin{bmatrix} 1\\2 \end{bmatrix} _{4}, \quad
\supp{y_{\Dt_2}^*} = \left\{ \begin{bmatrix} 2 \\ 1 \end{bmatrix}, \begin{bmatrix} 1 \\ 2 \end{bmatrix} \right\},\\
y_{\Dt_3}^* &= \frac{1}{2}  \begin{bmatrix} 1\\2 \end{bmatrix} _{4} + \frac{1}{2}  \begin{bmatrix} 2\\1 \end{bmatrix}_{4},\quad
\supp{y_{\Dt_3}^*}  = \left\{ \begin{bmatrix} 1 \\ 2 \end{bmatrix}, \begin{bmatrix} 2 \\ 1 \end{bmatrix} \right\}.
\end{align*}
From these support sets, we can extract two points
\[
x^{*,1} = (1,2,1,2),\quad x^{*,2} = (2,1,2,1).
\] 
They are both feasible points for this sparse copositive optimization problem and
$x_{\Dt_i}^{*,1}, x_{\Dt_i}^{*,2}\in \supp{y_{\Dt_i}^*}$ for all $i$.
Moreover, it holds
\[ f(x^{*,1}) = f(x^{*,2}) = -39 = f_2^{smo}. \]
Therefore, the relaxation \reff{eq:spcopmom} is tight, and both
$x^{*,1}$ and $x^{*,2}$ are global minimizers.
\end{Example}

When the assumptions of Theorem~\ref{thm:multipoints} are not satisfied,
the point $\pi(y^*)$ can still serve as a candidate optimizer.
An interesting question is what are sufficient conditions to ensure that  
$\pi(y^*)$ is a global optimizer of \reff{eq:spa_pop}. This is explored in the following section.

\section{The cop-SOS convexity}
\label{sec:tightness}

In this section, we show that the sparse Moment-SOS relaxations 
\eqref{eq:spcopmom}--\eqref{eq:spcopsos} are tight under the assumption of cop-SOS convexity.

We consider the case that every $f_i(x_{\Dt_i})$ is a convex polynomial in $x_{\Dt_i}$. Then \eqref{eq:spa_pop} becomes a linearly constrained convex optimization problem.
If it is bounded below (i.e., $f_{min} > - \infty$),
then \eqref{eq:spa_pop} must achieve the minimum value $f_{min}$
and has a global minimizer, say, $u$. We refer to \cite{BelKla02} for this.
Since all constraints in \eqref{eq:spa_pop} are linear,
the Karush-Kuhn-Tucker (KKT) conditions must hold \cite[Proposition~3.3.1]{DPB}: 
there exist Lagrange multiplier vectors $\lambda\in\re^{m_1}$, $\mu\in\re_+^{m_2}$ and $\eta\in\re_+^n$ such that
\be   \label{eq:KKTcondi}
\boxed{
\begin{array}{c}
	\nabla f(u) = \sum\limits_{i=1}^m \nabla f_i(u_{\Dt_i}) = A^T\lambda+C^T\mu+\eta,\\
	0\le \mu\perp (Cu-d)\ge 0,\, 0\le \eta \perp u \ge 0.
\end{array}
}
\ee
In the above, $\perp$ means the vectors are perpendicular.

\begin{thm} \label{thm:gen-cvx}
Suppose every $f_i(x_{\Dt_i})$ is convex and $u$ is an optimizer of \eqref{eq:spa_pop}. Then, there exist polynomials $p_i\in\re[x_{\Dt_i}]$ and there exists a Lagrange multiplier vector triple
$(\lambda,\mu,\eta)\in\re^{m_1}\times \re_+^{m_2}\times \re_+^n$
such that \reff{eq:KKTcondi} holds and 
\be  \label{eq:tightness_assump}
\boxed{
\begin{array}{c}
 p_1+\cdots+p_m +g(x;\lambda, \mu) + f_{min} = 0, \\
 f_i+p_i\ge 0\,\,\mbox{on $\re_+^{\Dt_i}$},\, i=1,\ldots,m.
\end{array}
}
\ee
\end{thm}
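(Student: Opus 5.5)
Since $u$ is an optimizer of a linearly constrained convex problem, the KKT conditions \reff{eq:KKTcondi} hold at $u$ for some $(\lambda,\mu,\eta)\in\re^{m_1}\times\re_+^{m_2}\times\re_+^n$, as recalled above via \cite[Proposition~3.3.1]{DPB}. I will take this triple and build each $p_i$ as an affine correction of $f_i$ by its tangent plane at $u_{\Dt_i}$. The difficulty is that $\nabla f(u)=\sum_i\nabla f_i(u_{\Dt_i})$ couples the blocks, while $A^T\lambda+C^T\mu+\eta$ is not split among the $\Dt_i$. So I will split $\eta$ and the linear form $g$ across blocks in a compatible way.

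First, define for each $i$ the affine polynomial
\[
\ell_i(x_{\Dt_i}) \coloneqq f_i(u_{\Dt_i}) + \nabla f_i(u_{\Dt_i})^T (x_{\Dt_i}-u_{\Dt_i}),
\]
where $\nabla f_i$ denotes the gradient in $x_{\Dt_i}$, embedded in $\re^n$ with zeros outside $\Dt_i$. By convexity, $f_i-\ell_i\ge 0$ on all of $\re^{\Dt_i}$. Next I need to absorb the leftover $\sum_i \ell_i$ into $g$ and a nonnegative term. Note that $g(x;\lambda,\mu)=(A^T\lambda+C^T\mu)^Tx-\lambda^Tb-\mu^Td$. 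By the KKT equation, $(A^T\lambda+C^T\mu)^T x=\sum_i\nabla f_i(u_{\Dt_i})^Tx-\eta^Tx$. Complementarity gives $\mu^T(Cu-d)=0$ and $\eta^Tu=0$, hence
\[
\lambda^Tb+\mu^Td=\lambda^TAu+\mu^TCu=\nabla f(u)^Tu .
\]
Therefore
\[
g(x;\lambda,\mu)=\sum_i \nabla f_i(u_{\Dt_i})^T(x_{\Dt_i}-u_{\Dt_i}) - \eta^T x
= \sum_i \big(\ell_i - f_i(u_{\Dt_i})\big) - \eta^Tx .
\]
Since $f_{min}=\sum_i f_i(u_{\Dt_i})$, this yields $\sum_i\ell_i=g+f_{min}+\eta^Tx$.

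Now split $\eta^Tx$ among blocks. Choose any partition $[n]=\Gamma_1\sqcup\cdots\sqcup\Gamma_m$ with $\Gamma_i\subseteq\Dt_i$; one exists because the $\Dt_i$ cover $[n]$, e.g.\ by assigning each $j$ to the first $i$ with $j\in\Dt_i$. Set $\eta^{(i)}\coloneqq\sum_{j\in\Gamma_i}\eta_jx_j\in\re[x_{\Dt_i}]$, so that $\sum_i\eta^{(i)}=\eta^Tx$. Define $p_i\coloneqq \eta^{(i)}-\ell_i\in\re[x_{\Dt_i}]$. Then
\[
\sum_i p_i+g+f_{min}=\eta^Tx-(g+f_{min}+\eta^Tx)+g+f_{min}=0,
\]
and $f_i+p_i=(f_i-\ell_i)+\eta^{(i)}\ge0$ on $\re_+^{\Dt_i}$, since $\eta\ge0$. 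This establishes \reff{eq:tightness_assump}.

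The main care point is the constant bookkeeping: the identity $\lambda^Tb+\mu^Td=\nabla f(u)^Tu$ relies on $Au=b$ and both complementarity conditions, including $\eta^Tu=0$. The nonnegativity of $f_i+p_i$ holds only on the orthant, not on all of $\re^{\Dt_i}$, precisely because of the $\eta^{(i)}$ term; this is why the statement is restricted to $\re_+^{\Dt_i}$.
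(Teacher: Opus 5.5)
Your proof is correct and is essentially the paper's argument. The paper sets $p_i=-(x-u)^T(\nabla f_i(u_{\Dt_i})-\eta_i)-f_i(u_{\Dt_i})$ for a splitting $\eta=\eta_1+\cdots+\eta_m$ with $\eta_i\in\re_+^{\Dt_i}$. Since $u^T\eta_i=0$, this coincides with your $p_i=\eta^{(i)}-\ell_i$, and your partition $\Gamma_1,\ldots,\Gamma_m$ is one concrete choice of that splitting. The only cosmetic difference is where complementarity enters: you use $\eta^Tu=0$ once in computing the constant term of $g$, while the paper applies $u^T\eta_i=0$ in each block.
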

\begin{proof}
Since all constraints are linear, the KKT conditions \eqref{eq:KKTcondi} must hold,  i.e., there exists  $(\lambda,\mu,\eta)$ satisfying \eqref{eq:KKTcondi}. 
We can write that 
\[ \eta = \eta_1+\cdots+\eta_m, \quad \text{where each} \quad 
\eta_i \in \re_+^{\Dt_i} .
\]
Here, each $\re_+^{\Dt_i}$ denotes the set of nonnegative vectors in $\re^n$ 
whose indices of positive entries are contained in $\Dt_i$. For each $i=1,\ldots, m$, let
\[
p_i(x) \, \coloneqq \, -(x-u)^T\big( \nabla f_i(u_{\Dt_i})-\eta_i \big)-f_i(u_{\Dt_i}).
\]
Since $f_i$ only depends on $x_{\Dt_i}$, $p_i$ is a polynomial only in $x_{\Dt_i}$. The complementarity   $0\le u\perp \eta \ge 0$ 
implies $u^T\eta_i = 0$ for all $i\in[m]$.
Since $f_i(x_{\Dt_i})$ is convex in $x_{\Dt_i}$, it holds
\[
f_i(x_{\Dt_i})-f_i(u_{\Dt_i})-(x-u)^T\nabla f_i(u_{\Dt_i}) \ge 0.
\]
Hence, we have
\[
\begin{aligned}
f_i(x_{\Dt_i})+p_i(x_{\Dt_i}) &= f_i(x_{\Dt_i})-f_i(u_{\Dt_i})-(x-u)^T (\nabla f_i(u_{\Dt_i})-\eta_i) \\
&= [f_i(x_{\Dt_i})-f_i(u_{\Dt_i})-(x-u)^T\nabla f_i(u_{\Dt_i}) ] + (x-u)^T\eta_i \\
& \ge (x-u)^T\eta_i  = x^T\eta_i \ge 0 
\end{aligned}
\]
for all $x\in\re_+^n$. This shows the second line in \eqref{eq:tightness_assump}.
Since $Au = b$, the KKT conditions in \reff{eq:KKTcondi} imply
\[
g(u;\lambda,\mu) = (Au-b)^T\lambda + (Cu-d)^T\mu = 0, 
\quad u^T \eta  = 0.
\] 
Since $\sum_{i=1}^m f_i(u_{\Dt_i}) = f_{min}$, it holds
\[
\sum\limits_{i=1}^m p_i(x) = -(x-u)^T(A^T\lambda+C^T\mu) -f_{min}.
\]
Then, one can verify that
\[
\sum\limits_{i=1}^m p_i(x) = g(u;\lambda, \mu)-g(x;\lambda,\mu)-f_{min}
= -g(x;\lambda,\mu)-f_{min}.
\]
Therefore, \eqref{eq:tightness_assump} holds.
\end{proof}

When conditions \eqref{eq:tightness_assump} and \eqref{eq:pfdecomp} are compared, there is a difference between the nonnegativity $f_i+p_i\ge 0$ on $\re_+^{\Dt_i}$ and the quadratic module membership $f_i+p_i\in \qmod_{\Dt_i}[x_{\Dt_i}]_{2k}$. We need stronger conditions to ensure the membership; see as in \cite{DeKlerk11} and \cite[Chap. 7]{NieBook}.

In polynomial optimization, the SOS-convexity is a useful property to prove tightness of Moment-SOS relaxations \cite[Chap.~7]{NieBook}.
A polynomial $p \in\re[x]$ is said to be {\it SOS convex} if its Hessian
$\nabla^2 p(x)$ is a sum of Hermitian squares, i.e., 
there exists a matrix polynomial $P(x)$ such that
\[
\nabla^2 p(x) \,\,  = \,\,  P(x) P(x)^T.
\]
The SOS convexity can be generalized to copositive optimization.

\begin{defi} \label{def:cop-SOS} 
A polynomial $p \in \re[x]$ is said to be cop-SOS convex if
there exist matrix polynomials $P_0(x)$, $P_1(x)$, \ldots, $P_n(x)$ such that
\be   \label{eq:cop-SOS}
\nabla^2 p(x) \, =  \, P_0(x)P_0(x)^T + x_1 P_1(x)P_1(x)^T + \cdots + x_n P_n(x)P_n(x)^T.
\ee
Similarly, $p$ is said to be cop-SOS concave if $-p$ is cop-SOS convex.
\end{defi}

Note that $p$ is cop-SOS convex if and only if its Hessian $\nabla^2 p(x)$ 
belongs to the matrix quadratic module of $(x_1,\ldots, x_n)$.
Hence, the cop-SOS convexity can be verified by solving a semidefinite program.
We refer to \cite[Chap.~10]{NieBook} for how to do this. 
A very interesting case is that $p$ is a cubic polynomial, and its Hessian can be written as 
\be   \label{cubic:cop-SOS}
\nabla^2 p(x) = A_0 + x_1 A_1 + \cdots + x_n A_n,
\ee
for some symmetric matrices $A_i$. 
Note that $p(x)$ is convex in $\re_+^n$ if and only if all $A_i \succeq 0$ are positive semidefinite. 
Every $A_i \succeq 0$ can be written as $P_iP_i^T$ for some matrix $P_i$. 
Therefore, a cubic polynomial $p$ is convex in $\re_+^n$ if and only if it is cop-SOS convex.

A useful property of SOS-convexity is that a type of Jensen's inequality holds 
(see \cite{Las09} or \cite[Theorem~7.1.6]{NieBook}). 
It also holds for cop-SOS convex polynomials.

\begin{lem} \label{lm:jensen_copositive}
(i) Let $\Dt = \{1,\ldots, n\}$ and let $y = (y_{\alpha})\in\re^{\N_{2k}^{\Dt}}$ be a tms such that 
\[
y_0=1, \quad  M_{\Dt}^{(k)}[y_{\Dt}]\succeq 0, \quad  L_{x_{\Dt}}^{\Dt, k}[y_{\Dt}] \succeq 0 .
\]
If $p \in \re[x_{\Dt}]_{2k}$ is cop-SOS convex, 
then for $u = \pi(y)$ it holds
\be \label{Jesen:cop-sos}
p( u )  \le  \langle p, y_{\Dt}\rangle.
\ee
(ii) For $\Dt_i = \{j_1,\ldots, j_{n_i}\}\subseteq [n]$,
let $y = (y_{\alpha})\in\re^{\N_{2k}^{\Dt_i}}$ be a tms such that
\[
y_0=1, \quad M_{\Dt_i}^{(k)}[y_{\Dt_i}]\succeq 0, \quad L_{x_{\Dt_i}}^{\Dt_i,k}[y_{\Dt_i}] \succeq 0 .
\]
If $f_i\in\re[x_{\Dt_i}]_{2k}$ is cop-SOS convex,
then for $u = \pi(y)$ it holds
\be  \label{Jesen:Dti}
f_i(u_{\Dt_i}) \, \le \, \langle f_i, y_{\Dt_i} \rangle.
\ee
\end{lem}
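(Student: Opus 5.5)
We adapt the standard proof of Jensen's inequality for SOS-convex polynomials (as in \cite{Las09} or \cite[Theorem~7.1.6]{NieBook}) by replacing the SOS structure of the Hessian with the quadratic-module structure \eqref{eq:cop-SOS}. For (i), set $u=\pi(y)$. Note that $u\ge 0$, since $L_{x_{\Dt}}^{\Dt,k}[y_{\Dt}]\succeq 0$ has the entries $y_{e_j}$ on its diagonal (these are the $(1,1)$ entries of the localizing blocks for $x_j$). Use the integral form of Taylor's theorem around $u$:
\[
p(x) = p(u) + \nabla p(u)^T(x-u) + (x-u)^T\Big(\int_0^1\!\!\int_0^t \nabla^2 p\big(u+s(x-u)\big)\,ds\,dt\Big)(x-u).
\]
Write $q(x)\coloneqq p(x)-p(u)-\nabla p(u)^T(x-u)$. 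Since $y_0=1$ and $\langle x_j,y\rangle=u_j$, we have $\langle p,y\rangle - p(u)=\langle q,y\rangle$. So it suffices to show $\langle q,y\rangle\ge 0$, i.e.\ that $q\in\qmod_{\Dt}[x_{\Dt}]_{2k}$, and then invoke \eqref{dual:QM:Dti}.

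To obtain the membership, substitute \eqref{eq:cop-SOS} into the double integral. At the point $z=u+s(x-u)=(1-s)u+sx$ we get
\[
\nabla^2p(z)=P_0(z)P_0(z)^T+\sum_{j}\big((1-s)u_j+sx_j\big)P_j(z)P_j(z)^T .
\]
Thus $(x-u)^T\nabla^2p(z)(x-u)$ is a sum of squares in $x$, plus terms $(1-s)u_j\cdot(\text{SOS})$ with $(1-s)u_j\ge 0$ (because $u\ge0$), plus terms $s x_j\cdot(\text{SOS})$. The first two kinds belong to $\Sig[x]$ and the last to $x_j\Sig[x]$. Integrating in $s,t$ preserves these cones, since they are closed convex cones in a finite-dimensional space and the integrands are polynomial in $(s,x)$. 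Approximating the integral by Riemann sums, or using a Gauss quadrature with positive weights that is exact in the polynomial variable $s$, keeps everything inside $\Sig[x]+\sum_j x_j\Sig[x]$.

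\textbf{The main obstacle is the degree bookkeeping.} We need $q\in\qmod_{\Dt}[x_{\Dt}]_{2k}$, that is, the SOS parts have degree at most $2k$ and the multipliers of $x_j$ have degree at most $2k-2$. Since $\deg q\le 2k$, we would argue that the representation \eqref{eq:cop-SOS} can be chosen degree-bounded: one can take $\deg(P_0P_0^T)\le \deg p-2$ and $\deg(x_jP_jP_j^T)\le\deg p-2$. This follows if the cop-SOS certificate is required to have no cancellation of leading terms. If that cannot be assumed, we would instead treat the statement with $k$ large enough that the certificate degrees fit, or define cop-SOS convexity with a truncated matrix quadratic module. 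With such a bound, each square in the $\Sig$ part has degree at most $2+(\deg p-2)\le 2k$, and each $x_j$ multiplier has degree at most $2k-2$. This gives $q\in\qmod_{\Dt}[x_{\Dt}]_{2k}$, hence $\langle q,y\rangle\ge0$, which proves \eqref{Jesen:cop-sos}.

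For (ii), apply (i) in the variables $x_{\Dt_i}$. The Hessian of $f_i$ lives only in the $\Dt_i$ block, and the cop-SOS certificate of $f_i$ as a polynomial in $x_{\Dt_i}$ uses only the multipliers $x_{j_1},\ldots,x_{j_{n_i}}$. The hypotheses on $M_{\Dt_i}^{(k)}[y_{\Dt_i}]$ and $L_{x_{\Dt_i}}^{\Dt_i,k}[y_{\Dt_i}]$ are exactly those of (i) after relabeling the variables. Moreover, $u_{\Dt_i}$ consists of the entries $y_{e_j}$ with $j\in\Dt_i$. Hence $f_i(u_{\Dt_i})\le\langle f_i,y_{\Dt_i}\rangle$, which is \eqref{Jesen:Dti}.
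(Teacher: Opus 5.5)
Your route is the paper's: a Taylor expansion of $p$ at $u=\pi(y)$ with integral remainder, substitution of \eqref{eq:cop-SOS}, membership of the remainder $q$ in $\qmod_{\Dt}[x_{\Dt}]_{2k}$, and then \eqref{dual:QM:Dti}. You are more careful than the paper on two points it leaves implicit. First, you check that $u\ge 0$, which is needed for the $(1-s)u_jP_jP_j^T$ pieces to be SOS. Second, you explain why integrating in $s$ stays inside the cones.

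The gap is the step you flag yourself. You get the degree bounds only under a ``no cancellation'' assumption, and otherwise propose to enlarge $k$ or truncate the definition of cop-SOS convexity. That would prove a different statement. No such change is needed, because leading terms in \eqref{eq:cop-SOS} can never cancel. For $x\in\re^n_+$ every summand in \eqref{eq:cop-SOS} is positive semidefinite, so taking traces gives
\[
\|P_0(x)\|_F^2\le \operatorname{tr}\nabla^2p(x),\qquad x_j\|P_j(x)\|_F^2\le \operatorname{tr}\nabla^2p(x)\qquad (x\in\re^n_+).
\]
Choose $v$ with all $v_j>0$ at which the top-degree homogeneous parts of $P_0$ and of each nonzero $P_j$ do not vanish. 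This is possible because a nonzero polynomial cannot vanish on an open set. Let $x=\tau v$ with $\tau\to\infty$. The left-hand sides grow like $\tau^{2\deg P_0}$ and $\tau^{2\deg P_j+1}$, with positive leading coefficients, while the right-hand side is $O(\tau^{\deg p-2})$. Hence every certificate satisfies $2\deg P_0\le \deg p-2$ and $2\deg P_j+1\le \deg p-2$.

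With these bounds your bookkeeping goes through for every $k$ with $2k\ge\deg p$. For $z=(1-s)u+sx$, the square $\|P_0(z)^T(x-u)\|^2$ has degree at most $\deg p\le 2k$. The SOS factors $\|P_j(z)^T(x-u)\|^2$ multiplying $(1-s)u_j$ and $sx_j$ have even degree at most $\deg p-1\le 2k-1$, hence at most $2k-2$. So $q\in\qmod_{\Dt}[x_{\Dt}]_{2k}$ under exactly the hypotheses of the lemma; the paper asserts this membership without comment. Adding this argument completes your proof, and your part (ii) by relabeling is fine.
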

\begin{proof}
(i) By the first order Taylor expansion formula, we have
\[
p(x) = p(u) + \nabla p(u)^T (x-u) +  Q(x,u)  {\color{red},}
\]
where  
\[
Q(x,u)  \, = \, \int_0^1 (1-t) (x-u)^T
\big[ \nabla^2 p( u + t(x-u) ) \big] (x-u) \mathtt{d}t .
\]
Since $p$ is cop-SOS convex, say, \reff{eq:cop-SOS} holds, we have
\begin{align*}
	Q(x, u)  = & \int_0^1 (1-t) (x-u)^T
	\Big[ P_0(  tx +(1-t)u ) P_0( tx +(1-t)u )^T + \\
	& \sum_{i=1}^n (tx_i +(1-t)u_i) 
	P_i(  tx +(1-t)u ) P_i(  tx +(1-t)u )^T 
	\Big] (x-u) \mathtt{d}t .
\end{align*}
This implies 
$
Q(x, u) \in \qmod_{\Dt}[x_{\Dt}]_{2k},
$
so 
\[
\langle p  - p(u) - \nabla p(u)^T(x-u),
y \rangle = \langle Q(x,u), y_{\Dt} \rangle \geq 0 .
\]
The above inequality is due to \reff{dual:QM:Dti}.
Therefore, we get
\[
\langle p, y_{\Dt}  \rangle \geq
\langle p(u) + \nabla p(u)^T(x-u),  y_{\Dt} \rangle
= p(u).
\]
In the above, the equality follows from $u = \pi(y)$ and
\[
\begin{gathered}
	\langle p(u),  y_{\Dt} \rangle =
	p(u) \langle 1,  y_{\Dt} \rangle  = p(u)  y_0 = p(u), \\
	\langle \nabla p(u)^T(x-u),  y_{\Dt} \rangle
	=  \nabla p(u)^T(u-u) = 0.	
\end{gathered}
\]
So, the inequality \reff{Jesen:cop-sos} holds.

(ii) This follows from (i) if we view $\Dt_i$ as $\Dt$.
\end{proof}

Under the cop-SOS convexity, we show the sparse moment relaxation \eqref{eq:spcopmom} is tight for all orders $k\ge k_0$. Recall that
$f_{min}$ denotes the optimal value of \eqref{eq:spa_pop}
and $f_k^{smo}$ is the optimal value of \eqref{eq:spcopmom} for the order $k$.
A point $x$ is said to be {\it strictly} feasible for \eqref{eq:spa_pop} if 
$Ax=b$, $Cx > d$ and $x>0$.

\begin{thm}\label{thm:cop-sos-cvx}
Assume $f_i\in\re[x_{\Dt_i}]$ is cop-SOS convex for every $i\in[m]$. 
Suppose $(\gamma^*, \lambda^*,\mu^*)$ is an optimizer of \eqref{eq:spcopsos} and $y^*$ 
is an optimizer of \eqref{eq:spcopmom} at a relaxation order $k\ge k_0$. 
Then, $f_{k}^{smo} = f_{min}$ and the point $\pi(y*) = (y_{e_1}^*,\ldots, y_{e_n}^*)$ is a minimizer of \eqref{eq:spa_pop}.
Moreover, if, in addition, \eqref{eq:spa_pop} has a strictly feasible point,
then $\gamma^* = f_{min}$ and
\[
f-g(x;\lambda^*, \mu^*)-f_{min} \, \in \, \qmod[x]_{spa,2k_0} .
\] 
\end{thm}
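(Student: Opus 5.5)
The plan has two parts: the moment side via the Jensen-type inequality, and the SOS side via the KKT construction of Theorem~\ref{thm:gen-cvx} combined with a Taylor-expansion certificate.

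\textbf{Moment relaxation.} Let $u \coloneqq \pi(y^*)$. The linear constraints in \eqref{eq:spcopmom} give $Au=b$ and $Cu\ge d$. For each $j\in[n]$, pick $i$ with $j\in\Dt_i$. The localizing matrix $L_{x_{\Dt_i}}^{\Dt_i,k}[y^*_{\Dt_i}]\succeq 0$ contains the diagonal entry $y^*_{e_j}$, which gives $u_j\ge 0$. Hence $u\in K$.

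Each $f_i$ is cop-SOS convex, and $y^*_{\Dt_i}$ satisfies $y_0=1$ together with the moment and localizing conditions. Lemma~\ref{lm:jensen_copositive}(ii) therefore gives $f_i(u_{\Dt_i})\le\langle f_i,y^*_{\Dt_i}\rangle$. Summing over $i$,
\[
f_{min}\le f(u)\le \langle f,y^*\rangle = f_k^{smo}\le f_{min},
\]
where the last inequality is \eqref{bd:mom-sos}. So equality holds throughout, $f_k^{smo}=f_{min}$, and $u$ is a minimizer.

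\textbf{SOS relaxation under strict feasibility.} Since a minimizer $u$ exists, I would redo the construction of Theorem~\ref{thm:gen-cvx} with certificates. Take KKT multipliers $(\lambda,\mu,\eta)$ satisfying \eqref{eq:KKTcondi}, split $\eta=\sum_i\eta_i$ with $\eta_i\in\re_+^{\Dt_i}$, and define $p_i$ as in that proof. Then
\[
f_i+p_i=\big[f_i(x_{\Dt_i})-f_i(u_{\Dt_i})-\nabla f_i(u_{\Dt_i})^T(x-u)\big]+x^T\eta_i .
\]
For the first bracket, the Taylor formula in the proof of Lemma~\ref{lm:jensen_copositive} writes it as $Q_i(x,u)$, an integral of terms of the form $(x-u)^TP_0P_0^T(x-u)$ and $(tx_j+(1-t)u_j)(x-u)^TP_jP_j^T(x-u)$. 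Because $u\ge0$, each term with $(1-t)u_j$ is an SOS times a nonnegative constant. Each term with $tx_j$ is $x_j$ times an SOS. Integrating in $t$ preserves these forms, since the integrand is polynomial in $t$ and the resulting Gram matrices remain PSD after integration.

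This places $Q_i$ in $\qmod_{\Dt_i}[x_{\Dt_i}]$ with degree bounded by $\deg f_i\le 2k_0$. The term $x^T\eta_i=\sum_{j\in\Dt_i}(\eta_i)_jx_j$ also lies in the module, so $f_i+p_i\in\qmod_{\Dt_i}[x_{\Dt_i}]_{2k_0}$. Theorem~\ref{thm:tight:char}(i) then yields
\[
f-g(x;\lambda,\mu)-f_{min}\in\qmod[x]_{spa,2k_0}\subseteq \qmod[x]_{spa,2k}.
\]
Consequently $\gamma=f_{min}$ is feasible for \eqref{eq:spcopsos}, so $\gamma^*\ge f_{min}$. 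Combined with $\gamma^*=f_k^{spa}\le f_{min}$, this gives $\gamma^*=f_{min}$.

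\textbf{Main obstacle.} The statement asserts the membership for the given optimizer $(\lambda^*,\mu^*)$, not for the KKT multipliers. This is where strict feasibility enters.

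Since $\gamma^*=f_{min}$, we have $\sigma\coloneqq f-g(x;\lambda^*,\mu^*)-f_{min}\in\qmod[x]_{spa,2k}$, so $\sigma\ge0$ on $\re^n_+$. Evaluating at the minimizer $u$ gives
\[
0\le \sigma(u) = -g(u;\lambda^*,\mu^*) = -(\mu^*)^T(Cu-d)\le 0 .
\]
Hence $u$ minimizes $\sigma$ over $\re^n_+$, and by the unconstrained-in-sign first-order conditions, $\eta^*\coloneqq\nabla f(u)-A^T\lambda^*-C^T\mu^*\ge0$ with $u^T\eta^*=0$. Also $(\mu^*)^T(Cu-d)=0$. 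Thus $(\lambda^*,\mu^*,\eta^*)$ is itself a KKT triple at $u$, and the construction above applies with these multipliers, giving the membership in $\qmod[x]_{spa,2k_0}$ for $(\lambda^*,\mu^*)$.

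Strict feasibility (Slater's condition) is what guarantees the primal-dual pair \eqref{eq:spcopmom}--\eqref{eq:spcopsos} has no duality gap, so that $f_k^{spa}=f_k^{smo}=f_{min}$ and $\gamma^*$ is attained. If needed, I would justify this by showing the moment feasible set has an interior point, for example a measure with full support near a strictly feasible point.
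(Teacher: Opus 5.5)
Your proof is correct. The moment half is the paper's argument (Lemma~\ref{lm:jensen_copositive} plus \eqref{bd:mom-sos}), with a useful extra step: you check $\pi(y^*)\ge 0$ from the localizing matrices.

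The SOS half takes a genuinely different route. The paper argues purely by conic duality: strict feasibility of \eqref{eq:spa_pop} makes \eqref{eq:spcopmom} strictly feasible, so there is no duality gap and $\gamma^*=f_k^{spa}=f_k^{smo}=f_{min}$. That route is short and generic, since moment tightness plus Slater would suffice for any problem. But it only certifies $f-g(x;\lambda^*,\mu^*)-f_{min}\in\qmod[x]_{spa,2k}$, the order at which $(\gamma^*,\lambda^*,\mu^*)$ is feasible. The sharper order $2k_0$ in the statement is not argued there.

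You instead complete Theorem~\ref{thm:gen-cvx}. You use the cop-SOS Taylor remainder to turn ``$f_i+p_i\ge 0$ on $\re_+^{\Dt_i}$'' into an explicit degree-$2k_0$ quadratic-module certificate. You then show that every SOS optimizer $(\lambda^*,\mu^*)$ is itself a KKT multiplier pair at $u$, via $\sigma(u)=0$ and the first-order conditions on $\re^n_+$. So the $2k_0$ certificate applies to the given optimizer.

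Nothing in this uses strict feasibility. KKT holds because the constraints are linear, and a minimizer exists by the first part. So your sentence ``this is where strict feasibility enters'' and your closing paragraph about Slater are misplaced. Your argument establishes the second claim, and attainment in \eqref{eq:spcopsos}, without that hypothesis.

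One detail is worth making explicit. You need $\deg Q_i\le \deg f_i\le 2k_0$, with the $x_j$-multiplied parts in $\Sig[x_{\Dt_i}]_{2k_0-2}$. This requires every term of the cop-SOS decomposition of $\nabla^2 f_i$ to have degree at most $\deg f_i-2$. That is automatic, because top-degree parts cannot cancel. An even top part $L_0L_0^T$ has nonzero trace $\|L_0\|_F^2$. An odd top part $\sum_j x_jL_jL_j^T$ has trace positive somewhere on the open orthant. The paper's Lemma~\ref{lm:jensen_copositive} relies on the same unstated fact.
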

\begin{proof}
Let $x^* = \pi(y^*)$. Since each $f_i$ is cop-SOS convex, Lemma~\ref{lm:jensen_copositive} implies
\[ 
f_i(x_{\Dt_i}^*)  \le \langle f_i,y_{\Dt_i}^*\rangle .
\] 
Since $x^*$ is feasible for \eqref{eq:spa_pop}, we get
\[
f_{min}\le f(x^*) = \sum\limits_{i=1}^m f_i(x_{\Dt_i}^*)
\le \sum\limits_{i=1}^m \langle f_i, y_{\Dt_i}^*\rangle
= f_{k}^{smo}.
\]
On the other hand, $f_{k}^{smo}\le f_{min}$ by \reff{bd:mom-sos},
so $f_{min} = f_{k}^{smo}$ and the minimum value is achieved at
$x^*$. In addition, if \eqref{eq:spa_pop} has a strictly feasible point, then the sparse moment relaxation \eqref{eq:spcopmom} is also strictly feasible
(this can similarly shown as in \cite[Theorem~2.5.2]{NieBook}).
In this case, the strong duality holds between \eqref{eq:spcopmom} and \eqref{eq:spcopsos},
which implies $f_{k}^{spa} = f_{k}^{smo} = f_{min}$.
\end{proof}

\section{Numerical experiments}
\label{sec:num}

We provide numerical experiments for the sparse Moment-SOS relaxations (\ref{eq:spcopmom})-(\ref{eq:spcopsos}).
They are implemented in {\tt Yalmip} \cite{yalmip},
which calls the SDP package {\tt Mosek} \cite{mosek}
with the default settings of parameters.
The computations are implemented in MATLAB R2022b on a Lenovo
Laptop with CPU@2.10GHz and RAM 16.0G.
For neatness, only four decimal digits are displayed for computational results.

\begin{Example}\rm
Consider the sparse copositive optimization problem
\[
\left\{
\baray{cl}
\min &f(x) = 
\underbrace{x_1^4 + x_2^4 - 10 x_1^2 x_2^2}_{f_1(x_{\Dt_1})} + 
\underbrace{6 x_2^4 + x_3^4 + 2 x_2^2 x_3^2}_{f_2(x_{\Dt_2})} + 
\underbrace{6 x_1^4 + x_3^4 + 2 x_1^2 x_3^2}_{f_3(x_{\Dt_3})} \\
\st &  x_1 + x_2 + x_3 =1,\,  3x_1 - x_2 + 2x_3 \ge 0.5, \\ & x_1 + 2x_2 \le 1.2,\, x\in\re_+^3,
\earay\right.
\]
with $\Dt_1=\{1,2\}$, $\Dt_2=\{2,3\}$, and $\Dt_3=\{1,3\}$.
%
%The function $f_1$ is not convex in $\re_+^2$, but $f_2$ and $f_3$ are.
%
We solve the sparse Moment-SOS relaxations \reff{eq:spcopmom}
and \reff{eq:spcopsos} for the initial relaxation order $k=2$.
For the optimizer $y^*$, we have
\[ 
\rank\, M_{\Dt_1}^{(2)}[y_{\Dt_1}^*] = 
\rank\, M_{\Dt_2}^{(2)}[y_{\Dt_2}^*] = 
\rank\, M_{\Dt_3}^{(2)}[y_{\Dt_3}^*] = 1.
\]
By Theorem~\ref{thm:rank1_tight}, this sparse relaxation is tight and
\[
f_{min} = f^{spa}_2 \approx 0.1201 .
\]
The obtained global minimizer is 
\[
x^* = \pi(y^*) \,\approx\, (0.3567,\,  0.3567,\,  0.2867).
\]
It took around 0.61 seconds.
\end{Example}

\begin{Example}\rm
\label{ex:odd_degree_8}
Consider the sparse copositive optimization problem
\[
\left\{\begin{array}{rl}
\min & f(x) \coloneqq f_1(x_{\Dt_1}) + f_2(x_{\Dt_2})+f_3(x_{\Dt_3}) + f_4(x_{\Dt_4}) \\
\st & x_1 - x_2 + x_3 - x_4 + x_5 - x_6 + x_7 - x_8 \ge 0, \\
& \sum\limits_{i=1}^8 i x_i = 1, \, x\in\re_+^8,	
\end{array}\right.
\]
where $\Dt_1 = \{1, 2, 3\}$, $\Dt_2 = \{3, 4, 5\}$,
$\Dt_3 = \{5, 6, 7\}$, $\Dt_4 = \{7, 8\}$ and
\[
\begin{aligned}
f_1(x_1,x_2.x_3) &= (x_1 + x_2)^3 + x_1^5 + x_2^2 + (x_2 - x_3)^4, \\
f_2(x_3,x_4,x_5) &= (x_3 + x_4)^3 + x_3^5 + x_4^2 + (x_4 - x_5)^4, \\
f_3(x_5,x_6,x_7) &= (x_5 + x_6)^3 + x_5^5 + x_6^2 + (x_6 - x_7)^4, \\
f_4(x_7,x_8) &= (x_7 + x_8)^3 + x_7^5 + x_8^2 + x_8^3.
\end{aligned}
\]
In observation like \reff{cubic:cop-SOS}, one can check that every $f_i$ is cop-SOS convex. We do this for $f_1$. Its even degree terms $x_2^2$ and $(x_2 - x_3)^4$ are SOS convex. The Hessian of the 5th-degree term can be written as $\nabla^2(x_1^5) = x_1 \cdot \operatorname{diag}(20x_1^2, 0, 0)$. 
The Hessian of the 3rd-degree term is a sum of SOS matrices by $x_1$ and $x_2$
\[
\nabla^2(x_1+x_2)^3 = 6x_1 \begin{bmatrix} 1 & 1 & 0 \\ 1 & 1 & 0 \\ 0 & 0 & 0 \end{bmatrix} 
+ 6x_2 \begin{bmatrix} 1 & 1 & 0 \\ 1 & 1 & 0 \\ 0 & 0 & 0 \end{bmatrix}.
\]
Consequently, $\nabla^2 f_1(x_{\Dt_1})$ has the decomposition as in \eqref{eq:cop-SOS}.  The cop-SOS convexity of other $f_i(x_{\Dt_i})$ can be verified similarly.
Since $\deg(f) = 5$, the initial relaxation order
$k_0 = \lceil 5/2 \rceil = 3$.
By Theorem~\ref{thm:cop-sos-cvx}, the sparse Moment-SOS relaxations
\reff{eq:spcopmom} and \reff{eq:spcopsos} are tight for all $k \ge 3$.
Solving them for $k =3$, we get  
$
f_{min}=f^{spa}_3 \approx 0.0008,
$
and the obtained optimizer is 
\[
x^* \, \approx \, (0.0272,\,0.0014,\,0.0465,\,0.0018,\,
0.0596,\,0.0023,\,0.0708,\,0.0019).
\]
It took around $0.76$ seconds.
\end{Example}

\begin{Example}\rm \label{ex:odd_cycle_5}
Consider the sparse copositive optimization problem
\[
\left\{
\baray{cl}
\min  & x_1x_2 + x_2x_3 + x_3x_4 + x_4x_5 + x_5x_1 \\
\st & x_1 - x_2 + x_3 - x_4 + x_5 = 1, \\
& 2x_1 + x_2 - x_3 + 2x_4 - x_5 \ge 3, \\
&x_1+\cdots+x_5 =5,  \, x\in\re_+^5.
\earay
\right.
\]
The objective is given by $f_i = x_i x_{i+1}$ for $i=1,2,3,4$
and $f_5 = x_5x_1$. The sparsity label sets are 
\[
\Dt_1 = \{1,2\}, \quad
\Dt_2 = \{2,3\}, \quad
\Dt_3 = \{3,4\}, \quad
\Dt_4 = \{4,5\}, \quad
\Dt_5 = \{5,1\}.
\]
%%The sparse objective polynomials are not convex.
We solve the sparse Moment-SOS relaxations \reff{eq:spcopmom}-\reff{eq:spcopsos} 
for different relaxation orders $k$.
The computational results are shown in Table~\ref{ta:ex_52}.
\begin{table}[htbp]
\caption{Computational results for Example~\ref{ex:odd_cycle_5}.}
\label{ta:ex_52}
\vspace{.1 in} 
\centering
\begin{tabular}{lccccc}
	\specialrule{.2em}{0em}{0.1em}
	$k$ & 1 & 2 & 3 & 4 & 5  \\
	\specialrule{.1em}{.1em}{0.1em}
	time (s)  & 0.59  & 0.64 & 0.81 & 0.86 & 1.28  \\
	$f^{spa}_k$ & -3.3721 & -3.8006 & -2.4143 & -0.0689 & -0.0040 \\
	\specialrule{.2em}{0em}{0.1em}
\end{tabular}
\end{table}
As the relaxation order $k$ increases, we get better and better lower bounds. 
We are not sure if the sparse Moment-SOS hierarchy of \reff{eq:spcopmom}-\reff{eq:spcopsos} converges or not for this problem.
In comparison, the dense Moment-SOS relaxation of order $k=1$
is tight. The minimum value $f_{min} = 0$ and the global minimizer is 
$x^* = (3,\,0,\,0,\,2,\,0).$
\end{Example}

\begin{Example}  \label{ex:cubic_agm_6}  \rm 
Consider the sparse copositive optimization problem
\[
\left\{
\baray{cl}
\min & f(x) = f_1(x_{\Dt_1}) + f_2(x_{\Dt_2}) \\
\st & x_1 + 2x_2 + 3x_3 + x_4 + 2x_5 + 3x_6 = 2, \\
& 5x_1 - 2x_2 - 3x_3 + 4x_4 - x_5 - 3x_6 \ge 0, \\
& x_1 + x_2 + \cdots + x_6 = 1, \,x\in\re_+^6,
\earay
\right.
\]
where $\Dt_1 = \{1,2,3\}$, $\Dt_2 = \{4,5,6\}$ and
\[
\begin{array}{cl}
	f_1(x_1, x_2, x_3) &= x_1^2x_2 + x_1x_2^2 + x_3^3 - 3x_1x_2x_3, \\
	f_2(x_4, x_5, x_6) &= x_4^2x_5 + x_4x_5^2 + x_6^3 - 3x_4x_5x_6.
\end{array}
\]
The performance of the sparse Moment-SOS relaxations \reff{eq:spcopmom}-\reff{eq:spcopsos} for $k=2,\ldots, 6$ are shown in Table \ref{ta:order5}. 
In comparison, we solve this problem by the dense Moment-SOS relaxation with order 2, we get $f_{min} \approx 0.0000$ 
and the obtained global minimizer is
\[ 
x^* \, \approx \, (0.1550,\,   0.1544 ,\,   0.1550  ,\,0.1788  ,\,0.1780  ,\,  0.1788).
\] 
\begin{table}[htbp]
\caption{Computational results for Example~\ref{ex:cubic_agm_6}.}
\label{ta:order5}
\vspace{.1 in} 
\centering
\begin{tabular}{lccccc}
	\specialrule{.2em}{0em}{0.1em}
	$k$ & 2 & 3 & 4 & 5 & 6 \\
	\specialrule{.1em}{.1em}{0.1em}
	time (s)  & 0.72      & 0.70    & 1.92    & 8.26                 & 102.30 \\
	$f^{spa}_k$ & -344.1471 & -0.6765 & -0.0033 & $-5.5853\cdot 10^{-5}$ & $-8.6066\cdot 10^{-6}$ \\
	\specialrule{.2em}{0em}{0.1em}
\end{tabular}
\end{table}
\end{Example}

\begin{Example}\rm\label{ex:star_graph_7}
Consider the sparse copositive optimization problem
\[
\left\{
\baray{cl}
\min & f(x) = f_{1}(x_{\Dt_1}) + f_{2}(x_{\Dt_2}) + \cdots +  f_{6}(x_{\Dt_6})  \\
\st & 2x_1 + x_2 + \cdots + x_7 = 3,\, x\in\re_+^7,
\earay
\right.
\]
where $\Dt_1=\{1,2\}$, $\Dt_i = \{1, i+1\}$ for $i=2,\dots,6$ and
\begin{eqnarray*}
	f_{1}(x_1, x_2) &=&x_1^4 x_2^2 + x_1^2 x_2^4 - 3 x_1^3 x_2^3,  \\
	f_{i}(x_1, x_{i+1}) &=& x_{i+1}^6 + x_1^6,  \quad i=2,\ldots, 6.
\end{eqnarray*} 
By solving the sparse Moment-SOS relaxations \reff{eq:spcopmom}-\reff{eq:spcopsos} with $k=3$, we get the lower bound $f^{spa}_3 \approx 1.0013\cdot 10^{-5}$. 
By solving the dense Moment-SOS relaxations with order $k=3$, we get the minimum $f_{min} \approx 1.9641\cdot 10^{-6}$ and the global minimizer is
\[
x^* \,\approx \, (0.0000,\,    2.5739 ,\,   0.0856  ,\,  0.0854 ,\,   0.0850 ,\,   0.0854  ,\,  0.0846).
\]
For comparison, we apply traditional nonlinear optimization methods to solve this problem. For convenience of implementation, the {\tt MATLAB} function \texttt{fmincon} is used. We use the default parameter settings but consider different feasible initial points.
Some typical initial points are selected as follows:
\begin{align*}
x^{(1)} &= \big(\frac{3}{8}, \frac{3}{8}, \frac{3}{8}, \frac{3}{8}, \frac{3}{8}, \frac{3}{8}, \frac{3}{8}\big),\quad
x^{(2)} = \big(\frac{1}{2}, \frac{1}{2}, \frac{3}{10}, \frac{3}{10}, \frac{3}{10}, \frac{3}{10}, \frac{3}{10} \big), \\
x^{(3)} &= \big( \frac{6}{5}, \frac{3}{5}, 0, 0, 0, 0, 0 \big),\quad
x^{(4)} = ( 0, 3, 0, 0, 0, 0, 0 ), \\
x^{(5)} &= \left(0.1230, 1.4500, 0.2000, 0.4000, 0.1500, 0.2500, 0.1810\right), \\
x^{(6)} &= \left(0.5711, 0.5363, 0.2465, 0.3176, 0.5114, 0.1440, 0.1020\right),\\
x^{(7)} &= \left(0.1838, 0.7275, 0.4026, 0.4948, 0.5011, 0.4568, 0.0496\right), \\
x^{(8)} &= \left(0.0000,\,2.5000,\,0.0800,\,0.0800,\,0.0800,\,0.0800,\,0.0800 \right).
\end{align*}
For each instance, {\tt fmincon} fails to find the minimum value $f_{min}$, and it returns the objective value around $0.0061$.
\end{Example}

\begin{Example} \rm \label{ex:large_scale_random}
Consider randomly generated sparse copositive optimization problems of the form
\be  \label{sparse:qcqp}
\left\{ \baray{cll}
\min &   \sum\limits _{i=1}^m \big( x_{\Dt_i} ^T Q_i  x_{\Dt_i}+\sum\limits_{j\in\Dt_i}x_j x_{\Dt_i}^T A_i^j x_{\Dt_i}\big) \\
\st  & Ax = b,\, Cx\ge d,\, x\in\re_+^n.
\earay \right.
\ee
We explore the performance of sparse Moment-SOS relaxations \reff{eq:spcopmom}-\reff{eq:spcopsos} for different $m,n$.
\begin{itemize}
\item 	For cases that $m=n$, we choose the blocks $\Dt_i$ as
\[
\Dt_i = \left\{ \begin{array}{ll}
	\{ i,\ldots,i+w-1 \} & \mbox{for}\quad  i\le n-w+1,\\
	\{ i,\ldots,n,1,\ldots, w-n+i-1 \} & \mbox{for} \quad  i> n-w+1.
\end{array}
\right.
\]
\item For cases that $n$ is even and $m=n/2$, we choose the blocks $\Dt_i$ as
\[
\Dt_i = \left\{ \begin{array}{ll}
	\{ 2i-1,\ldots,2i-1+w-1 \} & \mbox{for} \quad 2i-1\le n-w+1,\\
	\{ 2i-1,\ldots,n,1,\ldots, 2i-1+w-n-1 \} & \mbox{for} \quad 2i-1> n-w+1.
\end{array}
\right.
\]
\end{itemize}
Entries of $A$ and $C$ are randomly generated obeying
independent and identically distributed standard Gaussian distributions.
To ensure the feasible set is nonempty, we construct vectors
$b$ and $d$ by setting a randomly generated point $x_0$ as a priori feasible point. In the objective, we let $Q_i = R_i^TR_i$ where each entry of $R_i$ 
obeys the standard Gaussian distribution. The matrix $A_i^j$ is generated in the same manner.

The feasible set of \eqref{sparse:qcqp} is a nonnegative polyhedron
and all $Q_i, A_i^j$ are positive semidefinite matrices.
Thus, each $f_i$ in \eqref{sparse:qcqp} is cop-SOS convex.
By Theorem~\ref{thm:cop-sos-cvx}, the sparse relaxations \reff{eq:spcopmom}-\reff{eq:spcopsos} 
are tight for all orders $k \ge k_0=2$.
To show the efficiency of the sparse Moment-SOS relaxations \reff{eq:spcopmom}-\reff{eq:spcopsos}, 
we compare it with the standard (dense) Moment-SOS relaxations  (see \reff{eq:pop_sos_k}).
The computational results are reported in Table~\ref{tab:rand}.
The notation $\operatorname{dim}(y_{spa})$, $\operatorname{dim}(y_{den})$ 
stand for the numbers of variables for the $2$nd order sparse and the dense moment relaxations, respectively. For distinct $n,\,m,\,w$, we use $t_{spa}$ to denote the average time for solving
\eqref{sparse:qcqp} with the sparse moment relaxations and use $t_{den}$ to denote the average time for the dense moment relaxations. The notation ``oom'' means the computer is out-of-memory.
%%%%%%%%%%%%%%%%%%%%%%%%%%%%%%%%%%%%%
\begin{table}[htbp]
\caption{Comparison between the sparse Moment-SOS relaxations \reff{eq:spcopmom}-\reff{eq:spcopsos} 
	and the dense relaxation for solving \reff{sparse:qcqp}.}
\label{tab:rand}
\vspace{.1in}
\centering
\begin{tabular}{ccccccc}
\specialrule{.2em}{0em}{0.1em}
\multirow{2}{*}{$n$} & \multirow{2}{*}{$m$} & \multicolumn{3}{c}{Sparse Relaxation} & \multicolumn{2}{c}{Dense Relaxation} \\
\cmidrule(lr){3-5} \cmidrule(lr){6-7}
&  & $w$ & $\operatorname{dim}(y_{spa})$ & $t_{spa}$ (s) & $\operatorname{dim}(y_{den})$ & $t_{den}$ (s) \\
\specialrule{.1em}{.1em}{0.1em}
20  & 20 & 3 & 401  & 0.64 & 10626   & 2085.07 \\
20  & 10 & 5 & 911  & 0.73 & 10626   & 2546.59 \\
20  & 10 & 6 & 1401 & 0.76 & 10626   & 2140.23 \\
30  & 30 & 4 & 1051 & 0.87 & 46376   & \text{oom} \\
30  & 15 & 6 & 2101 & 0.95 & 46376   & \text{oom} \\
30  & 15 & 8 & 4276 & 2.22 & 46376   & \text{oom} \\
50  & 50 & 5 & 2801 & 1.17 & 316251  & \text{oom} \\
50  & 25 & 6 & 3501 & 1.13 & 316251  & \text{oom} \\
50  & 25 & 8 & 7126 & 2.55 & 316251  & \text{oom} \\
80  & 80 & 3 & 801  & 2.21 & 1929501 & \text{oom} \\
80  & 40 & 5 & 3641 & 3.18 & 1929501 & \text{oom} \\
80  & 40 & 6 & 5601 & 2.58 & 1929501 & \text{oom} \\
100 & 50 & 3 & 1501 & 1.76 & 4598126 & \text{oom} \\
100 & 50 & 4 & 2751 & 2.27 & 4598126 & \text{oom} \\
100 & 50 & 5 & 4551 & 1.50 & 4598126 & \text{oom} \\
\specialrule{.2em}{0em}{0.1em}
\end{tabular}
\end{table}
% I moved this paragraph below the table
As shown in Table \ref{tab:rand}, it is much more efficient to solve \eqref{sparse:qcqp}  by the sparse Moment-SOS hierarchy 
than by the dense one. As the dimension $n$ increases,
the dense relaxation quickly becomes intractable.
In contrast, the sparse relaxations can solve larger sized problems.
\end{Example}

In the following, we show applications of sparse Moment-SOS relaxations for checking copositive tensors. A $d$th order $n$-dimensional tensor $\mc{A}$ can be labelled as the multi-dimensional array such that
\be    \label{eq:Apoly}
\mc{A} \coloneqq (\mc{A}_{i_1\ldots, i_d})_{1\le i_1,\ldots, i_d\le n}.
\ee
The tensor $\mc{A}$ is symmetric if its entries are invariant
under permutation of indices, i.e., 
$\mc{A}_{i_1 \ldots i_d} = \mc{A}_{j_1\ldots j_d}$ whenever 
$(i_1, \ldots, i_d)$ is a permutation of $(j_1, \ldots, j_d)$.
Let $\tt{S}^d(\re^n)$ denote the space of $d$th order $n$-dimensional
real symmetric tensors. Every $\mc{A}\in\mathtt{S}^d(\re^n)$ 
is uniquely determined by the homogeneous polynomial
\begin{equation}\label{eq:ten2poly}
	\mc{A}(x)\coloneqq \sum\limits_{1\le i_1,\ldots, i_d\le n}
	\mc{A}_{i_1,i_2\ldots i_d} x_{i_1}x_{i_2}\cdots x_{i_d}.
\end{equation}
The tensor $\mc{A}$ is said to be {\it copositive} if $\mc{A}(x)$ is copositive, 
i.e., $\mc{A}(x) \ge 0$ for all $x \in \re_+^n$.
To check if $\mc{A}$ is copositive or not, we can solve the
polynomial optimization problem:
\begin{equation}   \label{eq:tensorcop}
	\left\{\begin{array}{cl}
		\min  & \mc{A}(x)\\
		\st &  x_1+ \cdots + x_n = 1,\, x\in\re_n^+.
	\end{array}
	\right.
\end{equation}
Clearly, $\mc{A}$ is copositive if and only if the optimal value of the above is nonnegative. When $\mc{A}(x)$ has the sparse pattern like \eqref{eq:spa_pop}, the sparse Moment-SOS relaxations \reff{eq:spcopmom} and \reff{eq:spcopsos} can be applied to solve \reff{eq:tensorcop}.

\begin{Example} \rm
Consider the symmetric tensor $\mA \in \mathtt{S}^4(\re^4)$ such that
\[
\begin{array}{l}
\mA_{1111}=\mA_{4444}=1,\quad \mA_{2222}=\mA_{3333}=2,\\
\mA_{1122}=\mA_{1212}=\mA_{1221}=\mA_{2112}=\mA_{2121}=\mA_{2211}=-\frac{1}{6}, \\
\mA_{2233}=\mA_{2323}=\mA_{2332}=\mA_{3223}=\mA_{3232}=\mA_{3322}=\frac{1}{6}, \\
\mA_{3344}=\mA_{3434}=\mA_{3443}=\mA_{4334}=\mA_{4343}=\mA_{4433}=-\frac{1}{6},
\end{array}
\]
and all other entries are zeros.
The polynomial $\mc{A}(x)$ as in \eqref{eq:ten2poly} is 
\[
\mc{A}(x) = \underbrace{x_1^4+x_2^4-x_1^2x_2^2}_{f_1(x_{\Dt_1})}
+ \underbrace{x_2^4+x_3^4+x_2^2x_3^2}_{f_2(x_{\Dt_2})}
+ \underbrace{x_3^4+x_4^4-x_3^2x_4^2}_{f_3(x_{\Dt_3})},
\]
with the sparsity pattern sets 
\[
\Dt_{1} = \{1,2\},\quad \Dt_2 = \{2 ,3\},\quad \Dt_3 = \{3 ,4\}.
\]
We solve the copositive optimization problem \eqref{eq:tensorcop} by
the sparse Moment-SOS relaxations \reff{eq:spcopmom} and \reff{eq:spcopsos}.
For the lowest relaxation order $k = 2$, the obtained optimizer $y^*$
satisfies the rank condition
\[ 
\rank\, M_{\Dt_1}^{(2)}[y_{\Dt_1}^*] =
\rank\, M_{\Dt_2}^{(2)}[y_{\Dt_2}^*] =
\rank\, M_{\Dt_3}^{(2)}[y_{\Dt_3}^*] = 1.
\]
By Theorem~\ref{thm:rank1_tight}, the sparse relaxations \reff{eq:spcopmom} and \reff{eq:spcopsos} are tight.
We get the optimal value 
$
f_{min} = f^{spa}_2 \approx 0.0164,
$
and the obtained global optimizer is
\[
x^* = \pi(y*)  \,\approx\,  ( 0.2843,\, 0.2157,\, 0.2157,\, 0.2843).
\]
The computation took around $0.28$ seconds. The optimal value of \eqref{eq:tensorcop} is positive, so $\mA$ is copositive.
\end{Example}

\section{Conclusions}
\label{sec:con}

In this paper, we propose the sparse Moment-SOS relaxations \reff{eq:spcopmom} and \reff{eq:spcopsos} to solve the sparse copositive polynomial optimization problem \eqref{eq:spa_pop}. We give conditions to characterize tightness of these sparse Moment-SOS relaxations. We further show that these sparse Moment-SOS relaxations are tight under the cop-SOS convexity assumption. Large scale problems with the sparsity pattern can be solved by these sparse Moment-SOS relaxations. Numerical experiments are presented to show the performance.

\bigskip \noindent  
{\bf Acknowledgements}
Jiawang Nie is partially supported by the NSF grant DMS-2513254
and the AFOSR grant FA9550-25-1-0298.

%References


\begin{thebibliography}{plain}


\bibitem{AhmedDurStill13}
Ahmed, F., Dür, M. and Still, G.: Copositive programming via semi-infinite optimization. J. Optim. Theory Appl. 159, 322–340 (2013).


\bibitem{mosek}
ApS., M.:
Mosek optimization toolbox for MATLAB: User’s Guide and Reference Manual. Version 4.1, (2019).


\bibitem{BelKla02}
Belousov, E.G. and Klatte, D.: A Frank–Wolfe type theorem for convex polynomial programs. Comput. Optim. Appl. 22(1), 37-48 (2002).


\bibitem{DPB}
Bertsekas, D.P.: Nonlinear Programming: Second Edition. Athena Scientific, Belmont (2023).


\bibitem{BomzeDur00}
Bomze, I.M., Dür, M., De Klerk, E., Roos, C., Quist, A.J. and Terlaky, T.: On copositive programming and standard quadratic optimization problems. J. Glob. Optim. 18, 301–320 (2000).

\bibitem{Burer09}
Burer, S.: On the copositive representation of binary and continuous nonconvex quadratic programs. Math. Program. 120, 479–495 (2009).

\bibitem{Burer15}
Burer, S.: A gentle, geometric introduction to copositive optimization. Math. Program. 151, 89–116 (2015).


\bibitem{DeKlerk11}
De Klerk, E. and Laurent, M.: 
On the Lasserre hierarchy of semidefinite programming relaxations of
convex polynomial optimization problems. SIAM J. Optim. 21(3), 824–832 (2011).


\bibitem{DickinsonGijben14}
Dickinson, P. and Gijben, L.: On the computational complexity of membership problems for the completely positive cone and its dual. Comput. Optim. Appl. 57, 403–415 (2014).

\bibitem{DobreVera15}
Dobre, C. and Vera, J.: Exploiting symmetry in copositive programs via semidefinite hierarchies. Math. Program. 151, 659–680 (2015).

\bibitem{Dukanovic10}
Dukanovic, I. and Rendl, F.: Copositive programming motivated bounds on the stability and the chromatic numbers. Math. Program. 121, 249–268 (2010).

\bibitem{Dur10}
Dür, M.: Copositive programming – a survey. In: Diehl, M., Glineur, F., Jarlebring, E. and Michiels, W. (eds.): Recent advances in optimization and its applications in engineering, pp. 3–20. Springer, Berlin (2010).

\bibitem{EichfelderJahn08}
Eichfelder, G. and Jahn, J.: Set-semidefinite optimization. J. Convex Anal. 15(4), 767–801 (2008).

\bibitem{EichfelderPovh13}
Eichfelder, G. and Povh, J.: On the set-semidefinite representation of nonconvex quadratic programs over arbitrary feasible sets. Optim. Lett. 7, 1373–1386 (2013).

\bibitem{henrion2005detecting}
Henrion, D. and Lasserre, J.B.: Detecting global optimality and extracting solutions in GloptiPoly. In: Henrion, D. and Garulli, A. (eds.): Positive polynomials in control, pp. 293–310. Springer, Berlin (2005).

\bibitem{HuangKang25}
Huang, L., Kang, S., Wang, J. and Yang, H.: Sparse polynomial optimization with unbounded sets. SIAM J. Optim. 35, 593–621 (2025).

\bibitem{HuangXie25}
Huang, L. and Xie, L.: A finite-termination algorithm for testing copositivity over the positive semidefinite cone.
arXiv preprint \url{arXiv:2601.06648} (2025).

\bibitem{Kim2011}
Kim, S., Kojima, M., Mevissen, M. and Yamashita, M.: Exploiting sparsity in linear and nonlinear matrix inequalities via positive semidefinite matrix completion. Math. Program. 129, 33–68 (2011).

\bibitem{korda2025convergence}
Korda, M., Magron, V. and Rios-Zertuche, R.: Convergence rates for sums-of-squares hierarchies with correlative sparsity. Math. Program. 209(1), 435–473 (2025).

\bibitem{KLMS23}
Korda, M., Laurent, M., Magron, V. and Steenkamp, A.: Exploiting ideal-sparsity in the generalized moment problem with application to matrix factorization ranks. Math. Program. 205(1), 703–744 (2024).


\bibitem{Las01}
Lasserre, J.B.: Global optimization with polynomials and the problem of moments. SIAM J. Optim. 11, 796–817 (2000).

\bibitem{Lasspacov06}
Lasserre, J.B.: Convergent SDP-relaxations in polynomial optimization with sparsity. SIAM J. Optim. 17, 822–843 (2006).



\bibitem{Las09}
Lasserre, J.B.: Convexity in semi-algebraic geometry and polynomial optimization. SIAM J. Optim. 19, 1995–2014 (2009).


\bibitem{Lasserre14}
Lasserre, J.B.: New approximations for the cone of copositive matrices and its dual. Math. Program. 144, 265–276 (2014).


\bibitem{LasBk15}
Lasserre, J.B.: An Introduction to Polynomial and Semi-Algebraic Optimization. Vol. 52. Cambridge University Press, Cambridge (2015).


\bibitem{Lau09}
Laurent, M.: Sums of squares, moment matrices and optimization over polynomials. In: Putinar, M. and Saff, E.B. (eds.): Emerging applications of algebraic geometry, pp. 157–270. Springer, Berlin (2009).

\bibitem{yalmip}
Löfberg, J.: YALMIP: A toolbox for modeling and optimization in MATLAB. In: IEEE International Conference on Robotics and Automation, pp. 284–289. IEEE, Piscataway (2004).

\bibitem{MagronWang23}
Magron, V. and Wang, J.: Sparse Polynomial Optimization: Theory and Practice. World Scientific, Singapore (2023).

\bibitem{vmjw21}
Magron, V. and Wang, J.: TSSOS: A Julia library to exploit sparsity for large-scale polynomial optimization. arXiv preprint \url{arXiv:2103.00915} (2021).


\bibitem{NieFlat13}
Nie, J.: Certifying convergence of Lasserre's hierarchy via flat truncation. Math. Program. 142, 485–510 (2013).


\bibitem{nieopcd}
Nie, J.: Optimality conditions and finite convergence of Lasserre's hierarchy. Math. Program. 146(1-2), 97–121 (2014).


\bibitem{NieBook}
Nie, J.: Polynomial and Moment Optimization. SIAM, Philadelphia (2023).

\bibitem{NieDemmel09}
Nie, J. and Demmel, J.: Sparse SOS relaxations for minimizing functions that are summations of small polynomials. SIAM J. Optim. 19(4), 1534–1558 (2009).


\bibitem{NieYangZhang18}
Nie, J., Yang, Z. and Zhang, X.: A complete semidefinite algorithm for detecting copositive matrices and tensors. SIAM J. Optim. 28(4), 2902–2922 (2018).

\bibitem{NieDehom23}
Nie, J., Tang, X., Yang, Z. and Zhong, S.: Dehomogenization for completely positive tensors. Numer. Algebra Control Optim. 13(2), 340–363 (2023).


\bibitem{NieQuTangZhang25}
Nie, J., Qu, Z., Tang, X. and Zhang, L.: A characterization for tightness of the sparse moment-SOS hierarchy. Math. Program. 215, 369–405 (2026).

\bibitem{NQTZ25mat}
Nie, J., Qu, Z., Tang, X. and Zhang, L.: Sparse polynomial optimization with matrix constraints. J Glob Optim (2026). \url{https://doi.org/10.1007/}


\bibitem{PovhRendl07}
Povh, J. and Rendl, F.: A copositive programming approach to graph partitioning. SIAM J. Optim. 18(1), 223–241 (2007).

\bibitem{PowersReznick01}
Powers, V. and Reznick, B.: A new bound for Pólya’s theorem with applications to polynomials positive on polyhedral. J. Pure Appl. Algebra 164, 221–229 (2001).

\bibitem{Putinar93}
Putinar, M.: Positive polynomials on compact semi-algebraic sets. Indiana Univ. Math. J. 42, 969–984 (1993).

\bibitem{QuTang24}
Qu, Z. and Tang, X.: A correlatively sparse Lagrange multiplier expression relaxation for polynomial optimization. SIAM J. Optim. 34(1), 127–162 (2024).


\bibitem{VargasLaurent23}
Vargas, L.F. and Laurent, M.: Copositive matrices, sums of squares and the stability number of a graph. In: Kojima, M., Lasserre, J.B. and Takeda, A. (eds.): Polynomial optimization, moments, and applications, pp. 113–151. Springer, Cham (2023).

\bibitem{waki2006sums}
Waki, H., Kim, S., Kojima, M. and Muramatsu, M.: Sums of squares and semidefinite program relaxations for polynomial optimization problems with structured sparsity. SIAM J. Optim. 17(1), 218–242 (2006).

\bibitem{TSSOS}
Wang, J., Magron, V. and Lasserre, J.B.: TSSOS: A Moment-SOS hierarchy that exploits term sparsity. SIAM J. Optim. 31(1), 30–58 (2021).

\bibitem{CSTSSOS}
Wang, J., Magron, V., Lasserre, J.B. and Mai, N.H.A.: CS-TSSOS: Correlative and term sparsity for large-scale polynomial optimization. ACM Trans. Math. Software 48(4), 1–26 (2022).

\end{thebibliography}
\end{document}